\newtheorem{lemma}{Lemma}
\newtheorem{prop}{Proposition}
\newtheorem{remark}{Remark}
\begin{document}

\title{On the pair correlation statistics for determinantal point processes on the sphere}

\author{Maryna Manskova}

\address{Graz University of Technology, Institute of Analysis and Number Theory, Steyrergasse 30/II, 8010 Graz, Austria}
\email{maryna.manskova@math.tugraz.at}

\keywords{Poissonian pair correlation, local statistics, uniform
distribution, point processes}

\begin{abstract} 
In this paper, we study the expected value of the pair correlation statistics of randomized point configurations on the sphere, with the emphasis on point configurations generated by determinantal point processes. We study the cases of the spherical ensemble, the harmonic ensemble, and jittered sampling, and compare our results with those for the ``truly random'' (i.i.d.) case. Our results give evidence of the small-scale repulsion phenomenon which is characteristic for determinantal point processes, while on larger scales there is good agreement between all our studied cases and the i.i.d.\  case.
\end{abstract}

\maketitle

\section{Introduction}
Let $\{x_i\}_{i=1}^\infty$ be a sequence in $[0,1]$. The \textit{pair correlation function} of $\{x_i\}_{i=1}^N$ is defined as
\begin{align*}
    R(s,N)=\frac{1}{N}\#\left\{(i,j):1\leq i,j\leq N,\ i\neq j,\ \|x_i-x_j\|\leq\frac{s}{N}\right\},
\end{align*}
where $\|\cdot\|$ denotes the distance to the nearest integer and $s\geq 0$ (see e.g. \cite{Rudnick1998}). The limiting function  of $R(s,N)$, if it exists, describes the asymptotic distribution of small-scale gaps between elements of the sequence. The normalisation of distances is $1/N$, which corresponds to the average spacing between $x_1,\dots,x_N$. If $\{X_i\}_{i=1}^\infty$ is a sequence of independent, uniformly distributed random variables in $[0,1]$, the following convergence holds almost surely
\begin{align}\label{Poissonian_pair_correlation_[0,1]}
   \lim\limits_{N\rightarrow\infty}R(s,N)=2s
\end{align}for every $s\geq 0$. We say that a sequence $\{x_i\}_{i=1}^\infty$ has \textit{Poissonian pair correlations} if it satisfies (\ref{Poissonian_pair_correlation_[0,1]}). 
This notion can be extended to deterministic point sets in unit cubes $[0,1]^d$ \cite{Bera2024,Hinrichs2019,Steinerberger2020} and on compact Riemannian manifolds \cite{Marklof2019}. Our particular interest in this paper are points on spheres. In the $d$-dimensional case, we study a local statistic of point sets defined by
\begin{align}\label{1/N indeces}
    \frac{1}{N}\#\left\{i\neq j\in\{1,\dots,N\}\colon d(x_i,x_j)\leq \frac{s}{N^{1/d}}\right\}
\end{align}
for a distance function $d(x,y)$. For i.i.d.\  random points the limit for $N\rightarrow\infty$ equals $C_ds^d$ for a suitable constant $C_d$ almost surely (see Proposition \ref{iid proposition}). Accordingly, deterministic point sets are said to have Poissonian pair correlations if they share this asymptotic behaviour. 

We study the notion of pair correlations for different samples of probabilistic point sets on the sphere.  Let  $(x_1, \dots , x_N)$ be a sample of points in $\mathbb{S}^d$ equipped with a distance function $\delta(x,y)$ (e.g. chordal or geodesic distance). We rewrite (\ref{1/N indeces}) as
$$G_{s,N}(x_1,\dots,x_N)\coloneqq\frac{1}{N}\sum\limits_{1\leq i\neq j \leq N}F_{s,N}(x_i,x_j),$$
where 
$$F_{s,N}(x,y)=\mathbbm{1}_{[0,s]}\left(\delta(x,y)N^{1/d}\right).$$
In the following chapters, we compute  $\mathbb{E}\left[G_{s,N}\right]$ as $N\rightarrow\infty$ for different determinantal point processes on spheres (and projective spaces). A characteristic property of these processes is a repulsive correlation structure. For small values of $s$, the asymptotic behaviour of $\lim\limits_{N\rightarrow\infty}\mathbb{E}\left[G_{s,N}\right]$ provides information on the degree of repulsion between the points, which turns out to be significantly different from $C_ds^d$ as in the i.i.d.\ case. We recall the basic definitions of determinantal point processes, study the behaviour of $\lim\limits_{N\rightarrow\infty}\mathbb{E}\left[G_{s,N}\right]$ for some particular examples (spherical ensemble, harmonic ensemble, and jittered sampling), and compare our findings to the case of i.i.d.\ points on the sphere.

\section{Preliminaries and Notation}

This section contains some preparatory calculations concerning the geometry of hyperspheres. Here and throughout this paper we denote by $\mathbb{S}^d$ the unit sphere in $\mathbb{R}^{d+1}$ and by $\sigma =\sigma_d$ the normalised surface area measure on $\mathbb{S}^d$. The geodesic distance $\vartheta=\vartheta_d$ takes values in $[0,\pi]$. Note that $\vartheta$ is equivalent to the Euclidean distance, since
$$\|\mathbf{x}-\mathbf{y}\|=\sqrt{2(1-\cos\vartheta(\mathbf{x},\mathbf{y}))}=2\sin\frac{\vartheta(\mathbf{x},\mathbf{y})}{2}.$$ We will use the notation
$$C_d(\mathbf{x},\varphi)=\{\mathbf{y}\in\mathbb{S}^d\, |\, \vartheta(\mathbf{x},\mathbf{y})<\varphi\}$$
for the hyperspherical cap with center $\mathbf{x}$ and of angular radius $\varphi$. 

Recall that the surface area of $\mathbb{S}^d$ is $$\omega_d=2\frac{\pi^{\frac{d+1}{2}}}{\Gamma\left(\frac{d+1}{2}\right)},$$ 
where $\Gamma(x)$ is the gamma function. The normalised surface area of a cap $C_d(\varphi)$ of angular radius $\varphi$ is
$$\sigma_d(C(\varphi))=\frac{\omega_{d-1}}{\omega_d}\int\limits_{0}^\varphi(\sin\theta)^{d-1} \mathrm{d}\theta.$$ 
For $d=2$, the formula is 
$$\sigma_2(C(\varphi))=\frac{1}{2}(1-\cos\varphi)=\left(\sin\frac{\varphi}{2}\right)^2.$$

We introduce the notion
\begin{align*}
    S(\rho,\tau)=\sigma\left(C(\mathbf{x}_1,\pi/2)\cap C(\mathbf{x}_2,\rho)\right)
\end{align*}
for the normalised area of the intersection of a hemisphere of $\mathbb{S}^2$ and a spherical cap of angular radius $\rho$, where $\vartheta(\mathbf{x}_1,\mathbf{x}_2)=\pi/2+\tau$. Note that $\tau$ is the distance from the centre of the cap to the boundary of the hemisphere. One can use a concise formula for the surface area of the intersection of two spherical caps presented in \cite{Lee_Kim}. In the notation of \cite{Lee_Kim}, we have $n=3$, $\theta_1=\pi/2$, $\theta_2=\rho$, and $\theta_v=\pi/2+\tau$. This gives us 
    \begin{equation*}
         S(\rho,\tau)=\int\limits_{\tau}^\rho\sin\varphi\cdot 2\arcsin\sqrt{1-\left(\frac{\tan \tau}{\tan \varphi}\right)^2}\, \mathrm{d}\varphi. 
    \end{equation*}
We provide a geometric proof for the formula.
\begin{lemma}\label{geometric_proof_formula}
    For $0<\tau<\rho<\pi/2$ we have
    \begin{equation*}
       S(\rho,\tau)=\frac{1}{4\pi}\left(\pi-\alpha(\tau)\cos\rho-2\beta(\tau)\right),
    \end{equation*}
    where \begin{align}\label{alpha and beta}
    \beta(\tau) =\arcsin\left(\frac{\sin \tau}{\sin \rho}\right)\quad \text{and}\quad
    \alpha(\tau) =2\arctan\left( \frac{\cot \beta}{\cos\rho}\right).
\end{align}
\end{lemma}

\begin{proof}
    Let us fix the South hemisphere. We can take any point $\mathbf{x}(1,\theta,\pi/2-\tau)$ as the center of the second cap. For $\tau>\rho$, the cap $C\left(\mathbf{x},\rho\right)$ does not intersect the equator. Hence $S(\rho,\tau)=0$. For $0<\tau<\rho<\pi/2$, we get two points of intersection denoted by $\mathbf{y}$ and $\mathbf{z}$. The area $S(\rho,\tau)$ can be expressed as
       $$S(\rho,\tau)=S^{(1)}(\rho,\tau)-S^{(2)}(\rho,\tau),$$
where $S^{(1)}(\rho,\tau)$ is the area of the sector of $C(\mathbf{x},\rho)$ between the points $\mathbf{y}$ and $\mathbf{z}$, and $S^{(2)}(\rho,\tau)$ is the area of the spherical triangle $\triangle \mathbf{xyz}$. 

We denote the plane angles of the triangle by $\alpha=\angle \mathbf{yxz}$ and $\beta=\angle \mathbf{xyz}=\angle \mathbf{xzy}$. The normalised areas of the sector and the triangle are given by
 $$S^{(1)}(\rho,\tau)=\frac{\alpha}{4\pi}(1-\cos \rho)\quad \text{and}\quad S^{(2)}(\rho,\tau)=\frac{1}{4\pi}(\alpha+2\beta-\pi).$$
 This gives us 
 $$S(\rho,\tau)=\frac{1}{4\pi}(\pi-\alpha\cos\rho-2\beta).$$

In order to find $\alpha$ and $\beta$, we denote by $\mathbf{w}$ the middle point between $\mathbf{y}$ and $\mathbf{z}$ on the equator. By the symmetry w.r.t.\ the great circle passing through $\mathbf{x}$, $\mathbf{w}$ and the South pole, we have $\angle \mathbf{xwz}=\angle \mathbf{xwy}=\pi/2$. The angular length of the arc $\mathbf{xw}$ is $\tau$. We obtain formulas for $\alpha$ and $\beta$ by applying the laws of cosine and sine to $\triangle \mathbf{xwz}$, 
$$\cos\frac{\pi}{2}=-\cos\frac{\alpha}{2}\cos\beta+\sin\frac{\alpha}{2}\sin\beta\cos\rho\quad \text{and}\quad \frac{\sin\beta}{\sin \tau}=\frac{\sin\frac{\pi}{2}}{\sin\rho}.$$
This establishes (\ref{alpha and beta}) and completes the proof.
\end{proof}

Another result we will use in this paper is a particular case of the Funk-Hecke formula (see \cite{Mller1966}).
\begin{lemma}\label{Funk-Hecke}
For all bounded measurable $f\colon [-1,1]\rightarrow\mathbb{R}$ and all $\mathbf{y}\in\mathbb{S}^d$,
\begin{align*}
    \int_{\mathbb{S}^d}f(\langle \mathbf{x},\mathbf{y}\rangle)\mathrm{d}\sigma_d(\mathbf{x})=\frac{\omega_{d-1}}{\omega_d}\int_{-1}^1 f(t) (1-t^2)^{d/2-1} \mathrm{d}t.
\end{align*}
\end{lemma}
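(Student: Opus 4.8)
The plan is to reduce the surface integral to a one-dimensional integral by slicing $\mathbb{S}^d$ into the parallels $\{\mathbf{x}\in\mathbb{S}^d : \vartheta(\mathbf{x},\mathbf{y})=\theta\}$, $0\le\theta\le\pi$, reading off the weight of each parallel from the cap-area formula already recorded in Section~2; no appeal to the general theory of spherical harmonics is needed for this degree-zero case of Funk--Hecke.

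First I would note that the integrand depends only on the geodesic distance to $\mathbf{y}$: writing $\theta=\vartheta(\mathbf{x},\mathbf{y})$ we have $\langle\mathbf{x},\mathbf{y}\rangle=\cos\theta$, so $f(\langle\mathbf{x},\mathbf{y}\rangle)=f(\cos\theta)$ is a bounded measurable function of $\theta\in[0,\pi]$ alone. Hence the integral is determined by the push-forward measure $\mu:=\big(\vartheta(\cdot,\mathbf{y})\big)_*\sigma_d$ on $[0,\pi]$.

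Next I would identify $\mu$. For $\varphi\in[0,\pi]$ one has $\mu([0,\varphi])=\sigma_d\big(C(\mathbf{y},\varphi)\big)$, and by rotational invariance of $\sigma_d$ this equals $\sigma_d(C(\varphi))=\tfrac{\omega_{d-1}}{\omega_d}\int_0^\varphi(\sin\theta)^{d-1}\,\mathrm{d}\theta$. Differentiating in $\varphi$ shows that $\mu$ has Lebesgue density $\tfrac{\omega_{d-1}}{\omega_d}(\sin\theta)^{d-1}$ on $[0,\pi]$, so that
\[
\int_{\mathbb{S}^d} f(\langle\mathbf{x},\mathbf{y}\rangle)\,\mathrm{d}\sigma_d(\mathbf{x})
=\int_0^\pi f(\cos\theta)\,\mathrm{d}\mu(\theta)
=\frac{\omega_{d-1}}{\omega_d}\int_0^\pi f(\cos\theta)(\sin\theta)^{d-1}\,\mathrm{d}\theta .
\]
Then the substitution $t=\cos\theta$ finishes it: $\mathrm{d}t=-\sin\theta\,\mathrm{d}\theta$, and since $\sin\theta\ge 0$ on $[0,\pi]$ we get $(\sin\theta)^{d-1}\,\mathrm{d}\theta=(\sin^2\theta)^{d/2-1}(-\mathrm{d}t)=(1-t^2)^{d/2-1}(-\mathrm{d}t)$; as $\theta$ runs from $0$ to $\pi$, $t$ runs from $1$ to $-1$, and reversing the limits cancels the sign, giving exactly $\tfrac{\omega_{d-1}}{\omega_d}\int_{-1}^1 f(t)(1-t^2)^{d/2-1}\,\mathrm{d}t$.

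There is no real obstacle here; the only steps deserving a word of care are the identification of the push-forward density from the cap-area formula (equivalently the coarea decomposition of the sphere into $(d-1)$-dimensional parallels of radius $\sin\theta$) and the tracking of the sign and of the integration limits in the change of variables $t=\cos\theta$. A convenient sanity check is $f\equiv 1$, for which the identity collapses to $\tfrac{\omega_{d-1}}{\omega_d}\int_{-1}^1(1-t^2)^{d/2-1}\,\mathrm{d}t=1$, consistent with the Beta-integral evaluation of $\omega_{d-1}/\omega_d$ implicit in the definition of $\omega_d$.
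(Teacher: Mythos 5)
Your argument is correct and complete. Note that the paper itself does not prove this lemma at all --- it simply states it as a special case of the Funk--Hecke formula and cites M\"uller's \emph{Spherical Harmonics}, so there is no ``paper proof'' to compare against. Your derivation is the standard elementary one for this degree-zero case: identify the push-forward of $\sigma_d$ under $\mathbf{x}\mapsto\vartheta(\mathbf{x},\mathbf{y})$ from the cap-area formula already recorded in Section~2, then substitute $t=\cos\theta$. All the delicate points (the density $\tfrac{\omega_{d-1}}{\omega_d}(\sin\theta)^{d-1}$, the sign and limit reversal in the substitution, the exponent $(\sin^2\theta)^{d/2-1}=(1-t^2)^{d/2-1}$) are handled correctly, and the sanity check $f\equiv 1$ does reduce to the Beta-integral identity $\int_{-1}^1(1-t^2)^{d/2-1}\,\mathrm{d}t=\sqrt{\pi}\,\Gamma(d/2)/\Gamma((d+1)/2)=\omega_d/\omega_{d-1}$. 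This is arguably preferable to the bare citation, since it uses nothing beyond what the paper has already set up and avoids invoking the full Funk--Hecke machinery for spherical harmonics of positive degree.
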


In the case of i.i.d.\ random points in $\mathbb{S}^d$, we compute the value of the constant for the asymptotic of (\ref{1/N indeces}).
\begin{prop}\label{iid proposition}
 For points $\{\mathbf{x}_1,\dots,\mathbf{x}_N\}$ taken independently from the uniform distribution on $\mathbb{S}^d$, we have 
\begin{equation}\label{iid asymptotic}
     \lim\limits_{N\rightarrow\infty}\mathbb{E}[G_{s,N}(\mathbf{x}_1,\dots,\mathbf{x}_N)]=\frac{\omega_{d-1}}{\omega_d}\cdot\frac{s^d}{d}.
\end{equation}
\end{prop}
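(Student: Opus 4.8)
The plan is to collapse the double sum to a single two-point probability by linearity of expectation, to identify that probability with the normalised area of a shrinking spherical cap using the rotational invariance of $\sigma_d$, and finally to pass to the limit using the small-angle behaviour of the cap area. This is essentially the baseline computation against which the determinantal ensembles will later be compared, so I do not expect a serious obstacle.

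\textbf{Step 1 (reduction to a two-point integral).} Since the $\mathbf{x}_i$ are i.i.d.\ uniform on $\mathbb{S}^d$, every term $\mathbb{E}[F_{s,N}(\mathbf{x}_i,\mathbf{x}_j)]$ with $i\neq j$ has the same value, namely $\mathbb{E}[F_{s,N}(\mathbf{x}_1,\mathbf{x}_2)]$. As there are $N(N-1)$ ordered pairs, linearity of expectation gives
$$\mathbb{E}[G_{s,N}(\mathbf{x}_1,\dots,\mathbf{x}_N)] = \frac{1}{N}\cdot N(N-1)\cdot \mathbb{E}\bigl[F_{s,N}(\mathbf{x}_1,\mathbf{x}_2)\bigr] = (N-1)\,\mathbb{E}\bigl[F_{s,N}(\mathbf{x}_1,\mathbf{x}_2)\bigr].$$

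\textbf{Step 2 (cap area).} Writing the remaining expectation as a double integral against $\sigma_d\otimes\sigma_d$, the inner integral $\int_{\mathbb{S}^d}\mathbbm{1}_{[0,s]}\bigl(\vartheta(\mathbf{x},\mathbf{y})N^{1/d}\bigr)\,\mathrm{d}\sigma_d(\mathbf{x})$ equals $\sigma_d(\{\mathbf{x}:\vartheta(\mathbf{x},\mathbf{y})\le sN^{-1/d}\})$, which by rotational invariance does not depend on $\mathbf{y}$ and equals the normalised area of a cap of angular radius $sN^{-1/d}$. Hence
$$\mathbb{E}\bigl[F_{s,N}(\mathbf{x}_1,\mathbf{x}_2)\bigr]=\sigma_d\bigl(C(sN^{-1/d})\bigr)=\frac{\omega_{d-1}}{\omega_d}\int_0^{sN^{-1/d}}(\sin\theta)^{d-1}\,\mathrm{d}\theta.$$
The same expression also follows from Lemma~\ref{Funk-Hecke} applied to $f(t)=\mathbbm{1}_{[0,s]}\bigl(\arccos(t)\,N^{1/d}\bigr)$ after the substitution $t=\cos\theta$; I would mention this alternative but carry out the cap-area version.

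\textbf{Step 3 (passing to the limit).} Combining Steps 1 and 2,
$$\mathbb{E}[G_{s,N}] = (N-1)\,\frac{\omega_{d-1}}{\omega_d}\int_0^{sN^{-1/d}}(\sin\theta)^{d-1}\,\mathrm{d}\theta.$$
The only point requiring (minimal) care is the uniform small-angle estimate for $\sin\theta$. Since $\sin\theta/\theta\to1$ as $\theta\to0$, for every $\varepsilon\in(0,1)$ there is $\theta_0>0$ with $(1-\varepsilon)\theta\le\sin\theta\le\theta$ on $[0,\theta_0]$; for $N$ large enough that $sN^{-1/d}\le\theta_0$ this yields
$$(1-\varepsilon)^{d-1}\frac{s^d}{d\,N}\le\int_0^{sN^{-1/d}}(\sin\theta)^{d-1}\,\mathrm{d}\theta\le\frac{s^d}{d\,N}.$$
Multiplying by $(N-1)\,\omega_{d-1}/\omega_d$, letting $N\to\infty$ and then $\varepsilon\to0$ gives the claimed limit $\frac{\omega_{d-1}}{\omega_d}\cdot\frac{s^d}{d}$. (Equivalently, substituting $\theta=uN^{-1/d}$ reduces the integral to $N^{-1}\int_0^s(N^{1/d}\sin(uN^{-1/d}))^{d-1}\,\mathrm{d}u$ and one applies dominated convergence with dominating function $u^{d-1}$ on $[0,s]$.) A sanity check in the case $d=2$, where $\omega_1/\omega_2=1/2$, recovers the limit $s^2/4$, consistent with $\sigma_2(C(\varphi))=\sin^2(\varphi/2)\sim\varphi^2/4$.
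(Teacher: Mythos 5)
Your proof is correct and follows essentially the same route as the paper: reduce to a single two-point expectation by linearity and exchangeability, identify it with the normalised cap area $\sigma_d(C(sN^{-1/d}))$, and take the limit. The paper states the last step without the explicit $\sin\theta\sim\theta$ justification you supply, but the argument is the same.
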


\begin{proof}
We have
\begin{align*}    \lim\limits_{N\rightarrow\infty}\mathbb{E}[G_{s,N}]  &=\lim\limits_{N\rightarrow\infty}\frac{1}{N}\cdot N(N-1)\cdot \mathbb{E}\left[\mathbbm{1}_{[0,s]}\left(\vartheta(\mathbf{x},\mathbf{y})N^{1/d}\right)\right]
    \\  &=\lim\limits_{N\rightarrow\infty}(N-1)\cdot \sigma_d(C(sN^{-1/d}))
    \\ &=\frac{\omega_{d-1}}{\omega_d}\cdot\frac{s^d}{d}.
\end{align*}
\end{proof}

Note that for $\mathbb{S}^2$ we obtain $\lim\limits_{N\rightarrow\infty}\mathbb{E}[G_{s,N}] =\frac{s^2}{4}$. Our goal is to compare the case of i.i.d.\ random points (\ref{iid asymptotic}) to other probabilistic point sets obtained from determinantal point processes (such as spherical ensemble, harmonic ensemble, and jittered sampling), with a particular emphasis of level repulsion phenomena, which correspond to very small values of $s$.

\section{Determinantal point process}

 In this section, we recall some basic theory of determinantal point processes. The best general reference here is \cite{Hough2009-xh}.
 
 Let $(\Lambda,\mathcal{B}, \mu)$ be a measurable space, where $\Lambda$ is a locally compact Polish space, $\mathcal{B}$ is the Borel sigma-algebra, and $\mu$ is a Radon measure on $\Lambda$. A sample of $N$ distinct points $\{x_1,\dots,x_N\}$ in $\Lambda$ can be seen as a simple counting measure $\chi =\sum_i \delta_{x_i}$. A simple random point process on $\Lambda$ is a random simple counting measure on $\Lambda$.

Let $\mathcal{X}$ be a simple point process on $\Lambda$. For $1\leq k\leq N$, a $k$-point correlation function $\rho_k\colon \Lambda^k\rightarrow [0,+\infty)$ is a function such that for any family of disjoint sets $A_1,\dots , A_k\in\mathcal{B}$, we have 
$$\mathbb{E}\left[\prod\limits_{j=1}^k \mathcal{X}(A_j) \right]=\int\limits_{A_1}\cdots \int\limits_{A_k}\rho_k(x_1,\dots,x_k)\mathrm{d}\mu(x_1)\cdots \mathrm{d}\mu(x_k).$$
We shall also require that $\rho_k=0$ if $x_i=x_j$ for some $i\neq j$. 

The point process $\mathcal{X}$ with correlation functions $\rho_k$ is called \textit{determinantal} if there exists a kernel $K\colon \Lambda^2\rightarrow \mathbb{C}$ such that
$$\rho_k(x_1,\dots,x_k)=\det\left[K(x_i,x_j)\right]_{1\leq i,j\leq k},$$
for all $k\geq 1$ and $x_1,\dots , x_k\in \Lambda$. Moreover, the joint intensities $\rho_k$ satisfy:
$$\mathbb{E} \sum\limits_{x_1,\dots,x_k\in\Lambda}f(x_1,\dots,x_k)=\int f(x_1,\dots,x_k)\rho_k(x_1,\dots,x_k)$$
for $x = (x_1, \dots , x_k)$  generated by the associated point process and for any measurable function $f$.

One way of generating a determinantal point process is via integral operators. Let $\{\varphi_i\}_{i=1}^N$ be an orthonormal set of functions in $L^2(\Lambda,\mu)$. Then the projection kernel 
$$K(x,y)=\sum\limits_{i=1}^N  \varphi_i(x)\overline{\varphi_i(y)}$$
defines a determinantal point process that has $N$ points almost surely.

\section{Spherical ensemble}

The point pair statistics of the spherical ensemble was studied in \cite{Alishahi2015}. In this section, we review the results of Alishahi and Zamani in our setting.

Let $A, B$ be $N\times N$ random matrices with i.i.d.\ complex Gaussian entries. Then 
the eigenvalues of  $A^{-1}B$ form a determinantal process in the complex plane. 
The \textit{spherical ensemble} of $N$ points is obtained by stereographically
projecting the eigenvalues, which defines a determinantal point process in the unit sphere $(\mathbb{S}^2,\sigma)$ with the $2$-point correlation function $\rho_2^{(N)}$  given by
$$\rho_2^{(N)}(p,q)=N^2\cdot \left(1-\left(1-\frac{\|p-q\|^2}{4}\right)^{N-1}\right).$$
\begin{prop}\label{spherical proposition}
 For points $\mathbf{x}_1,\dots,\mathbf{x}_N$ in $\mathbb{S}^2$ generated by the spherical ensemble, we have
\begin{equation*}
     \lim\limits_{N\rightarrow\infty}\mathbb{E}[G_{s,N}(\mathbf{x}_1,\dots,\mathbf{x}_N)]=\frac{s^2}{4}-1+e^{-s^2/4}.
\end{equation*}
As a consequence, we get
\begin{equation*}
     \lim\limits_{N\rightarrow\infty}\mathbb{E}[G_{s,N}]=\frac{s^4}{32}+O(s^6)\quad\text{as}\quad s\rightarrow 0
\end{equation*}
and 
\begin{equation*}
     \lim\limits_{N\rightarrow\infty}\mathbb{E}[G_{s,N}]= \frac{s^2}{4}+O(s^{-2})\quad\text{as}\quad s\rightarrow\infty.
\end{equation*}
\end{prop}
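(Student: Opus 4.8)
The plan is to reduce $\mathbb{E}[G_{s,N}]$ to an explicit one-dimensional integral by combining the defining property of the $2$-point correlation function with the rotational invariance of $\rho_2^{(N)}$, and then to evaluate that integral in closed form before passing to the limit.

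First, since $G_{s,N}(\mathbf{x}_1,\dots,\mathbf{x}_N)=\frac1N\sum_{i\neq j}F_{s,N}(\mathbf{x}_i,\mathbf{x}_j)$ with $F_{s,N}$ bounded and measurable, the correlation-function identity from Section~3 gives
$$\mathbb{E}[G_{s,N}]=\frac1N\int_{\mathbb{S}^2}\int_{\mathbb{S}^2}F_{s,N}(\mathbf{x},\mathbf{y})\,\rho_2^{(N)}(\mathbf{x},\mathbf{y})\,\mathrm{d}\sigma(\mathbf{x})\,\mathrm{d}\sigma(\mathbf{y}).$$
Both factors depend on $\mathbf{x},\mathbf{y}$ only through $t=\langle\mathbf{x},\mathbf{y}\rangle$: we have $\|\mathbf{x}-\mathbf{y}\|^2=2(1-t)$, so $1-\tfrac{\|\mathbf{x}-\mathbf{y}\|^2}{4}=\tfrac{1+t}{2}$, and (for $N$ large enough that $sN^{-1/2}\le\pi$) the condition $\vartheta(\mathbf{x},\mathbf{y})\le sN^{-1/2}$ is equivalent to $t\ge\cos(sN^{-1/2})$. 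Fixing $\mathbf{y}$ and applying the Funk–Hecke formula (Lemma~\ref{Funk-Hecke}) with $d=2$, where $\tfrac{\omega_{1}}{\omega_{2}}=\tfrac12$ and $(1-t^2)^{d/2-1}\equiv 1$, collapses the inner integral to $\tfrac12\int_{-1}^1(\cdots)\,\mathrm{d}t$, and the outer integral over $\mathbf{y}$ merely contributes the factor $\sigma(\mathbb{S}^2)=1$. This yields
$$\mathbb{E}[G_{s,N}]=\frac{N}{2}\int_{\cos(sN^{-1/2})}^{1}\left(1-\Bigl(\tfrac{1+t}{2}\Bigr)^{N-1}\right)\mathrm{d}t.$$

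Next I would substitute $u=\tfrac{1-t}{2}=\sin^2\!\bigl(\tfrac{\vartheta}{2}\bigr)$, so that $\tfrac{1+t}{2}=1-u$ and the endpoint $t=\cos(sN^{-1/2})$ corresponds to $u=a_N:=\sin^2\!\bigl(\tfrac{s}{2\sqrt N}\bigr)$. One gets $\mathbb{E}[G_{s,N}]=N\int_0^{a_N}\bigl(1-(1-u)^{N-1}\bigr)\,\mathrm{d}u$, an elementary integral evaluating to
$$\mathbb{E}[G_{s,N}]=N a_N+(1-a_N)^N-1.$$
Letting $N\to\infty$ and using $a_N=\tfrac{s^2}{4N}+O(N^{-2})$, we obtain $N a_N\to\tfrac{s^2}{4}$, while $N\log(1-a_N)=-Na_N+O(Na_N^2)\to-\tfrac{s^2}{4}$, hence $(1-a_N)^N\to e^{-s^2/4}$; this gives the claimed limit $\tfrac{s^2}{4}-1+e^{-s^2/4}$. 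The small-$s$ behaviour then follows from $e^{-s^2/4}=1-\tfrac{s^2}{4}+\tfrac12\bigl(\tfrac{s^2}{4}\bigr)^2-\cdots$, whose first two terms cancel $\tfrac{s^2}{4}-1$ and leave $\tfrac{s^4}{32}+O(s^6)$; the large-$s$ claim is immediate since $e^{-s^2/4}$ is super-polynomially small.

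There is no deep obstacle here: the computation is essentially exact. The only points requiring a little care are the verification that $sN^{-1/2}\le\pi$ for large $N$ so the cap condition is genuinely $t\ge\cos(sN^{-1/2})$, and the limiting step for $(1-a_N)^N$, where one needs $Na_N^2\to0$ to discard the error term in $N\log(1-a_N)$ — both of which are routine.
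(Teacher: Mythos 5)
Your proof is correct and takes essentially the same route as the paper: reduce $\mathbb{E}[G_{s,N}]$ via the two-point correlation function and rotational invariance to a one-dimensional integral over a shrinking cap, evaluate it in closed form, and pass to the limit. The only difference is cosmetic: you use the geodesic distance, so the endpoint is $a_N=\sin^2\bigl(\tfrac{s}{2\sqrt N}\bigr)$ and a short limiting argument for $(1-a_N)^N$ is needed, whereas the paper uses the chordal distance, for which the integral is exactly $\tfrac{s^2}{4}-\bigl(1-\bigl(1-\tfrac{s^2}{4N}\bigr)^{N}\bigr)$ and the limit is immediate.
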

\begin{proof}
For the Euclidean distance on $\mathbb{S}^2$, we have
\begin{align*}
    \mathbb{E}[G_{s,N}]&=\frac{1}{N}\iint\limits_{p,q\in\mathbb{S}^2}F_{s,N}(p,q)\rho_2^{(N)}(p,q)\mathrm{d}\sigma(p)\mathrm{d}\sigma(q)
    \\ &=\frac{1}{N}\int\limits_{p\in\mathbb{S}^2}F_{s,N}(p,q_0)\rho_2^{(N)}(p,q_0)\mathrm{d}\sigma(p)
    \\ &=\frac{1}{N}\int\limits_{p\in C(q_0,r_{s,N})}\rho_2^{(N)}(p,q_0)\mathrm{d}\sigma(p)
    \\ &=\frac{s^2}{4}- \left(1-\left(1-\frac{s^2}{4N}\right)^{N}\right),
\end{align*}
where $r_{s,N}=\arccos(1-\frac{s^2}{2N})$. As $N$ tends to infinity, we obtain
$$\lim\limits_{N\rightarrow\infty}  \mathbb{E}[G_{s,N}]=\frac{s^2}{4}-1+e^{-s^2/4}.$$
\end{proof}
 In comparison with the corresponding result for the i.i.d.\ model (\ref{iid asymptotic}), the spherical ensemble exhibits ``repulsion'' for small values of $s$, which is expected for a determinantal point process. For large $s$, the asymptotic behaviour of $ \mathbb{E}[G_{s,N}]$ closely matches the asymptotic behaviour in the i.i.d.\ case.

One can try to do similar computations for higher dimensional spheres $\mathbb{S}^d$. There exists a generalisation for even and odd dimensions (see \cite{Beltrn2019} and  \cite{Beltrn2018} respectively). For $k>1$ the kernel of the $2k$-dimensional case is still homogeneous, but not isotropic (i.e. not invariant under rotations). This makes computations of $\mathbb{E}[G_{s,N}]$ more complicated.

\section{Harmonic ensemble}

The separation distance of points generated by the harmonic ensemble was studied in \cite{Beltrn2016}. An upper bound for $\mathbb{E}[G_{s,N}]$ was given by Beltrán, Marzo, and Ortega-Cerdá in \cite[Proposition 4]{Beltrn2016}. We will calculate the exact asymptotic for large and small values of $s$.

For a natural number $L$, let us consider the vector space of spherical harmonics of degree at most $L$ in $\mathbb{S}^d$. We fix an orthonormal basis with respect to the norm $L^2(\mathbb{S}^d,\sigma)$. The dimension of the space is
$$N=\frac{2L+d}{d}\binom{d+L-1}{L}.$$
The \textit{harmonic ensemble} is the determinantal point process in $(\mathbb{S}^d,\sigma)$ with
$N$ points almost surely induced by the reproducing kernel
$$K_L(\mathbf{x},\mathbf{y})=\frac{N}{P_L^{(1+\lambda,\lambda)}(1)}P_L^{(1+\lambda,\lambda)}(\langle \mathbf{x},\mathbf{y}\rangle),$$
where $\lambda=\frac{d-2}{2}$ and the Jacobi polynomials are normalized by $P_L^{(1+\lambda,\lambda)}(1)=\binom{L+\frac{d}{2}}{L}$. Recall that 
\begin{align}\label{binom}
    \binom{n+x}{n}=\frac{n^x}{\Gamma(x+1)}\left(1+O\left(\frac{1}{n}\right)\right)\quad \text{as}\ n\rightarrow \infty.
\end{align}  

Note that the number of points $N$ generated by the harmonic ensemble is
\begin{align}\label{NL relation}
N=\frac{2}{\Gamma(d+1)}L^d+o(L^d).    
\end{align}
Throughout, we consider $N\rightarrow\infty$ or $L\rightarrow\infty$ when convenient.
\begin{prop}\label{harmonic proposition}
 For points $\mathbf{x}_1,\dots,\mathbf{x}_N$ in $\mathbb{S}^d$ generated by the harmonic ensemble, we have
 \begin{equation*}
     \lim\limits_{N\rightarrow\infty}\mathbb{E}[G_{s,N}(\mathbf{x}_1,\dots,\mathbf{x}_N)]=\frac{\omega_{d-1}}{\omega_d} \left(\frac{\Gamma(d+1)}{2}\right)^{2/d}\frac{s^{d+2}}{(d+2)^2}+O(s^{d+4})
\end{equation*}
as $s\rightarrow 0$ and 
\begin{equation*}
     \lim\limits_{N\rightarrow\infty}\mathbb{E}[G_{s,N}(\mathbf{x}_1,\dots,\mathbf{x}_N)]=\frac{\omega_{d-1}}{\omega_d}\cdot\frac{s^d}{d}+O(1)\quad\text{as}\quad s\rightarrow\infty.
\end{equation*}

\end{prop}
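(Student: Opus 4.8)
The plan is to turn $\mathbb{E}[G_{s,N}]$ into a one-dimensional integral, pass to the limit, and then read off the two regimes. Since the harmonic ensemble is determinantal with a real, rotation-invariant reproducing kernel $K_L$ for which $K_L(\mathbf{x},\mathbf{x})=N$, its two-point correlation function is $\rho_2^{(N)}(\mathbf{x},\mathbf{y})=N^2-K_L(\mathbf{x},\mathbf{y})^2$. Starting from $\mathbb{E}[G_{s,N}]=\frac1N\iint F_{s,N}\rho_2^{(N)}\,\mathrm{d}\sigma\,\mathrm{d}\sigma$, using rotation invariance to freeze one argument, and applying the Funk--Hecke formula (Lemma~\ref{Funk-Hecke}) followed by the substitution $t=\cos\theta$ and the rescaling $\theta=uN^{-1/d}$, I obtain, after the powers of $N$ cancel exactly,
\begin{equation*}
  \mathbb{E}[G_{s,N}]=\frac{\omega_{d-1}}{\omega_d}\int_0^s\left(1-\left(\tfrac{K_L(\cos(uN^{-1/d}))}{N}\right)^2\right)\bigl(N^{1/d}\sin(uN^{-1/d})\bigr)^{d-1}\,\mathrm{d}u ,
\end{equation*}
where I write $K_L(\cos\theta)=\tfrac{N}{P_L^{(1+\lambda,\lambda)}(1)}P_L^{(1+\lambda,\lambda)}(\cos\theta)$ by abuse of notation.

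Next I would identify the pointwise limit of the integrand. The factor $K_L(\cos\theta)/N$ equals $P_L^{(\alpha,\beta)}(\cos\theta)/P_L^{(\alpha,\beta)}(1)$ with $\alpha=1+\lambda=d/2$ and $\beta=\lambda$, and the relevant input is the classical Mehler--Heine asymptotics $L^{-\alpha}P_L^{(\alpha,\beta)}(\cos(z/L))\to(z/2)^{-\alpha}J_\alpha(z)$ (uniformly on compact $z$-sets), together with $P_L^{(\alpha,\beta)}(1)\sim L^\alpha/\Gamma(\alpha+1)$ from (\ref{binom}). By (\ref{NL relation}) one has $LN^{-1/d}\to(\Gamma(d+1)/2)^{1/d}=:c_d$, so $uN^{-1/d}=z_L/L$ with $z_L\to c_du$, and hence
\begin{equation*}
  \frac{K_L(\cos(uN^{-1/d}))}{N}\longrightarrow \Gamma(\tfrac d2+1)\Bigl(\tfrac{c_du}{2}\Bigr)^{-d/2}J_{d/2}(c_du)=:\mathcal{J}_{d/2}(c_du),
\end{equation*}
the normalized Bessel function with $\mathcal{J}_{d/2}(0)=1$, while $N^{1/d}\sin(uN^{-1/d})\to u$. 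Since $\alpha=d/2\ge\beta=\lambda$ and $\alpha\ge-1/2$, the classical bound $\max_{[-1,1]}|P_L^{(\alpha,\beta)}|=P_L^{(\alpha,\beta)}(1)$ gives $0\le 1-(K_L/N)^2\le1$, and with $\sin x\le x$ the integrand is dominated by $u^{d-1}$ on $[0,s]$ uniformly in $N$. Dominated convergence then yields
\begin{equation*}
  \lim_{N\to\infty}\mathbb{E}[G_{s,N}]=\frac{\omega_{d-1}}{\omega_d}\int_0^s\bigl(1-\mathcal{J}_{d/2}(c_du)^2\bigr)u^{d-1}\,\mathrm{d}u .
\end{equation*}

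Finally I would extract the asymptotics from this closed form. For $s\to0$, the expansion $\mathcal{J}_\nu(z)=1-\frac{z^2}{4(\nu+1)}+O(z^4)$ gives $1-\mathcal{J}_{d/2}(c_du)^2=\frac{c_d^2u^2}{d+2}+O(u^4)$; integrating against $u^{d-1}$ over $[0,s]$ produces $\frac{c_d^2}{(d+2)^2}s^{d+2}+O(s^{d+4})$, and since $c_d^2=(\Gamma(d+1)/2)^{2/d}$ this is exactly the claimed small-$s$ expansion. For $s\to\infty$, I split the integral as $\int_0^s u^{d-1}\,\mathrm{d}u-\int_0^s\mathcal{J}_{d/2}(c_du)^2u^{d-1}\,\mathrm{d}u=\frac{s^d}{d}-\int_0^s\mathcal{J}_{d/2}(c_du)^2u^{d-1}\,\mathrm{d}u$; the Bessel bound $J_\nu(z)=O(z^{-1/2})$ forces $\mathcal{J}_{d/2}(c_du)^2u^{d-1}=O(u^{-2})$ as $u\to\infty$, so the subtracted integral converges to a finite constant (with remainder $O(s^{-1})$), giving $\frac{\omega_{d-1}}{\omega_d}\cdot\frac{s^d}{d}+O(1)$.

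I expect the main technical obstacle to be the Mehler--Heine step: one needs the asymptotics of $P_L^{(1+\lambda,\lambda)}$ in the regime where the angular argument scales like $1/L$, and must phrase it carefully enough to pass to the limit inside the integral (although, as noted, the crude bound $|P_L^{(\alpha,\beta)}|\le P_L^{(\alpha,\beta)}(1)$ already supplies the domination, so only pointwise convergence is genuinely needed). The remaining care is bookkeeping: tracking $c_d$ correctly through the rescaling and matching the constant $(\Gamma(d+1)/2)^{2/d}$ in the small-$s$ term.
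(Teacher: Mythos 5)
Your proposal is correct and follows essentially the same route as the paper: Funk--Hecke reduction, rescaling of the angular variable, Mehler--Heine asymptotics for $P_L^{(1+\lambda,\lambda)}$, then the Maclaurin expansion of $J_{d/2}$ for the small-$s$ term and the convergence of $\int_0^\infty \frac{1}{y}J_{d/2}(y)^2\,\mathrm{d}y$ for the large-$s$ bound. The only difference is organizational (and welcome): you justify the interchange of limit and integral via the bound $|P_L^{(\alpha,\beta)}|\le P_L^{(\alpha,\beta)}(1)$ and dominated convergence before extracting asymptotics, whereas the paper first splits the integral into $I^{(1)}_{s,N}-I^{(2)}_{s,N}$ and passes to the limit in each piece separately.
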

For the geodesic distance, let us compute 
\begin{align*}
    \lim\limits_{N\rightarrow\infty}\mathbb{E}[G_{s,N}]&=\lim\limits_{N\rightarrow\infty}\frac{1}{N}\iint\limits_{p,q\in\mathbb{S}^d}F_{s,N}(p,q)\rho_2^{(N)}(p,q)\mathrm{d}\sigma(p)\mathrm{d}\sigma(q)
    \\ &=\lim\limits_{N\rightarrow\infty}N\frac{\omega_{d-1}}{\omega_d}\int\limits_{\cos\varphi_{s,N}}^1\left(1-\frac{P_L^{(1+\lambda,\lambda)}(t)^2}{\binom{L+d/2}{L}^2}\right)(1-t^2)^{d/2-1}\mathrm{d}t
    \\ &=\lim\limits_{N\rightarrow\infty}N\frac{\omega_{d-1}}{\omega_d}\int\limits_0^{\varphi_{s,N}}\left(1-\frac{P_L^{(1+\lambda,\lambda)}(\cos \varphi )^2}{\binom{L+d/2}{L}^2}\right)(\sin\varphi)^{d-1}\mathrm{d}\varphi
    \\ &=\lim\limits_{N\rightarrow\infty} N\frac{\omega_{d-1}}{\omega_d}\int\limits_0^{\varphi_{s,N} L}\left(1-\frac{P_L^{(1+\lambda,\lambda)}\left(\cos \frac{y}{L} \right)^2}{\binom{L+d/2}{L}^2}\right)\left(\frac{y}{L}\right)^{d-1}\frac{\mathrm{d}y}{L}
    \\ &=\lim\limits_{N\rightarrow\infty}\Bigg[\frac{N}{L^d }\frac{\omega_{d-1}}{\omega_d}\int\limits_0^{\varphi_{s,N} L}y^{d-1}\mathrm{d}y
    \\ & \qquad \qquad \quad -\frac{N}{\binom{L+d/2}{L}^2}\frac{\omega_{d-1}}{\omega_d}\cdot\int\limits_0^{\varphi_{s,N} L}\left(\frac{P_L^{(1+\lambda,\lambda)}\left(\cos \frac{y}{L} \right)}{L^{d/2}}\right)^2 y^{d-1}\mathrm{d}y\Bigg]
    \\ &=\lim\limits_{N\rightarrow\infty}\left[I^{(1)}_{s,N}-I^{(2)}_{s,N}\right].
\end{align*}
where $\varphi_{s,N}=\frac{s}{N^{1/d}}$ is small for large $N$ and fixed $s$. We used the fact that $\sin\varphi \approx \varphi$ for small values of $\varphi$. For the first integral we have
$$ \lim\limits_{N\rightarrow\infty}I^{(1)}_{s,N}=\lim\limits_{N\rightarrow\infty}\frac{N}{L^d }\frac{\omega_{d-1}}{\omega_d}\int\limits_0^{\varphi_{s,N} L}y^{d-1}\mathrm{d}y= \lim\limits_{N\rightarrow\infty}\frac{N}{L^d }\frac{\omega_{d-1}}{\omega_d}\cdot \frac{1}{d}\left(\varphi_{s,N}L\right)^d= \frac{\omega_{d-1}}{\omega_d}\cdot\frac{s^d}{d}.$$

For the second integral, we need the following formula.
\begin{lemma}[Mehler-Heine formula]\label{Mehler-Heine formula}
    Let $\alpha,\beta$ be real numbers. Then, uniformly on compact subsets of $\mathbb{C}$
    $$\lim\limits_{n\rightarrow\infty}n^{-\alpha}P^{(\alpha,\beta)}_n\left(\cos\frac{z}{n}\right)=\left(\frac{z}{2}\right)^{-\alpha}J_\alpha(z),$$
    where $J_\alpha$ is the Bessel function of order $\alpha$.
\end{lemma}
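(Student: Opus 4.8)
The plan is to derive the statement from the classical form of the Mehler--Heine theorem (as in Szeg\H{o}'s monograph on orthogonal polynomials) by starting from the hypergeometric representation of the Jacobi polynomials and passing to the limit term by term. Recall that
\[
P_n^{(\alpha,\beta)}(x)=\binom{n+\alpha}{n}\,{}_2F_1\!\left(-n,\,n+\alpha+\beta+1;\,\alpha+1;\,\frac{1-x}{2}\right)=\binom{n+\alpha}{n}\sum_{k=0}^{n}\frac{(-n)_k\,(n+\alpha+\beta+1)_k}{(\alpha+1)_k\,k!}\left(\frac{1-x}{2}\right)^{k},
\]
where $(a)_k=a(a+1)\cdots(a+k-1)$ is the Pochhammer symbol and the sum terminates because $(-n)_k=0$ for $k>n$. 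Substituting $x=\cos(z/n)$ gives $\frac{1-x}{2}=\sin^2\!\big(\tfrac{z}{2n}\big)=\tfrac{z^2}{4n^2}\big(1+O(n^{-2})\big)$, uniformly for $z$ in a fixed compact set $K\subset\mathbb{C}$.

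First I would fix $k$ and let $n\to\infty$: since $|(-n)_k|\le n^k$ and $(n+\alpha+\beta+1)_k=n^k\big(1+O_k(n^{-1})\big)$, while $\big(\tfrac{1-\cos(z/n)}{2}\big)^{k}=\tfrac{z^{2k}}{4^{k}n^{2k}}\big(1+O_k(n^{-2})\big)$, the $k$-th summand tends to $\dfrac{(-1)^{k}\,(z/2)^{2k}}{(\alpha+1)_k\,k!}$, uniformly on $K$. To justify the interchange of limit and summation (and to obtain uniformity) I would produce a summable majorant independent of $n$: for $k\le n$ one has $|(n+\alpha+\beta+1)_k|\le (Cn)^{k}$ with $C$ depending only on $\alpha,\beta$, and on $K$ one has $\big|\tfrac{1-\cos(z/n)}{2}\big|\le D/n^{2}$ for $n$ large, so the $k$-th summand is dominated by $\dfrac{(CD)^{k}}{(\alpha+1)_k\,k!}$, a convergent series. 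A dominated-convergence argument for series (Tannery's theorem) then yields, uniformly on $K$,
\[
\lim_{n\to\infty}{}_2F_1\!\left(-n,\,n+\alpha+\beta+1;\,\alpha+1;\,\tfrac{1-\cos(z/n)}{2}\right)=\sum_{k=0}^{\infty}\frac{(-1)^{k}\,(z/2)^{2k}}{(\alpha+1)_k\,k!}=\Gamma(\alpha+1)\,(z/2)^{-\alpha}J_\alpha(z),
\]
the last equality being the power-series definition of $J_\alpha$ combined with $\Gamma(k+\alpha+1)=\Gamma(\alpha+1)(\alpha+1)_k$.

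It remains to handle the prefactor. By (\ref{binom}) with $x=\alpha$ we have $\binom{n+\alpha}{n}=\tfrac{n^{\alpha}}{\Gamma(\alpha+1)}\big(1+O(n^{-1})\big)$, hence $n^{-\alpha}\binom{n+\alpha}{n}\to 1/\Gamma(\alpha+1)$. Multiplying this with the previous display, the two factors $\Gamma(\alpha+1)$ cancel and we obtain $n^{-\alpha}P_n^{(\alpha,\beta)}(\cos(z/n))\to (z/2)^{-\alpha}J_\alpha(z)$ uniformly on compact subsets of $\mathbb{C}$, as claimed. The only genuinely delicate point is the uniform domination of the series — bounding $(-n)_k(n+\alpha+\beta+1)_k$ against $n^{2k}$ with constants independent of $n$, so that the passage to the limit may be carried out term by term and the convergence is uniform on $K$; the rest is bookkeeping. (In the present application only $\alpha=d/2>0$, $\beta=(d-2)/2\ge 0$ occurs, where every Pochhammer symbol is positive and these estimates are immediate.)
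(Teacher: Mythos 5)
The paper does not prove this lemma at all: it is the classical Mehler--Heine asymptotic for Jacobi polynomials (Szeg\H{o}, \emph{Orthogonal Polynomials}, Theorem 8.1.1) and is simply quoted. Your argument is the standard textbook derivation of that theorem --- expand $P_n^{(\alpha,\beta)}$ via its ${}_2F_1$ representation, note that the argument $\tfrac{1-\cos(z/n)}{2}=\sin^2\!\big(\tfrac{z}{2n}\big)$ has size $z^{2}/(4n^{2})$ uniformly on compacta, pass to the limit term by term via Tannery's theorem with the majorant $(CD)^{k}/\big(|(\alpha+1)_k|\,k!\big)$, and absorb the prefactor $\binom{n+\alpha}{n}\sim n^{\alpha}/\Gamma(\alpha+1)$ so that the two factors of $\Gamma(\alpha+1)$ cancel --- and it is correct as written. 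The one caveat is that the factored hypergeometric form you start from, with $(\alpha+1)_k$ in the denominator, degenerates when $\alpha$ is a negative integer, so strictly speaking the argument does not cover every pair of real $\alpha,\beta$ admitted by the statement; for those exceptional values one must instead work with the unfactored expansion $\sum_{k}\frac{(\alpha+k+1)_{n-k}}{(n-k)!}\,\frac{(n+\alpha+\beta+1)_k}{k!}\big(\tfrac{x-1}{2}\big)^{k}$ or reduce to nearby $\alpha$ by a contiguity relation. As you observe, this is immaterial for the paper's application, where only $\alpha=d/2>0$ and $\beta=(d-2)/2\ge 0$ occur.
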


Note that there exists $c_d>0$ such that 
$$\varphi_{s,N}L=\frac{s}{N^{1/d}}\cdot L\leq c_d s.$$
Applying Lemma \ref{Mehler-Heine formula} for $\alpha=1+\lambda=\frac{d}{2}$, $\beta=\lambda$, $n=L$ and the compact set $\left\{z\in\mathbb{C}:|z|\leq c_d s\right\}$, we get
\begin{align*}
   \lim\limits_{L\rightarrow\infty}\int\limits_0^{\varphi_{s,N} L}\left(\frac{P_L^{(1+\lambda,\lambda)}\left(\cos \frac{y}{L} \right)}{L^{d/2}}\right)^2 y^{d-1}\mathrm{d}y
    &= \lim\limits_{L\rightarrow\infty} \int\limits_0^{\varphi_{s,N} L}\left(\left(\frac{y}{2}\right)^{-d/2}J_{d/2}(y)\right)^2 y^{d-1}\mathrm{d}y 
    \\ &= \lim\limits_{L\rightarrow\infty}2^d\int\limits_0^{\varphi_{s,N} L}\frac{1}{y}J_{d/2}(y)^2\mathrm{d}y.
\end{align*}

\begin{lemma}\label{Integral expansion}
   For $\nu$ positive and $x$ small,
\begin{align*}
    \int\limits_0^{x} \frac{1}{t}J_{\nu}(t)^2\mathrm{d}t=x^{2\nu}\frac{1}{ 2^{2\nu+1}\Gamma(\nu+1)^2\nu}-x^{2\nu+2}\frac{1}{2^{2\nu+2}\Gamma(\nu+2)^2}+O(x^{2\nu+4}).
\end{align*}   

\end{lemma}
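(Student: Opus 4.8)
The plan is to reduce everything to a single power series in $x$, integrated termwise. First I would recall the ascending series for the Bessel function,
\[
J_\nu(t)=\sum_{k=0}^\infty \frac{(-1)^k}{k!\,\Gamma(\nu+k+1)}\left(\frac{t}{2}\right)^{2k+\nu},
\]
which converges for all $t$ and, in particular, uniformly on the bounded interval $[0,x]$. Squaring gives $J_\nu(t)^2 = t^{2\nu}\sum_{m=0}^\infty c_m t^{2m}$ with
\[
c_m = \frac{1}{2^{2\nu+2m}}\sum_{k=0}^m \frac{(-1)^m}{k!\,(m-k)!\,\Gamma(\nu+k+1)\,\Gamma(\nu+m-k+1)},
\]
so that $\frac{1}{t}J_\nu(t)^2 = \sum_{m\ge 0} c_m t^{2\nu+2m-1}$. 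Integrating from $0$ to $x$ termwise (justified by uniform convergence on compacts, and valid since $2\nu>0$ so the integrand is integrable at $0$) yields
\[
\int_0^x \frac{1}{t}J_\nu(t)^2\,\mathrm{d}t = \sum_{m\ge 0} \frac{c_m}{2\nu+2m}\, x^{2\nu+2m}.
\]

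Next I would just read off the first two coefficients. For $m=0$ there is a single term $k=0$, giving $c_0 = 2^{-2\nu}\Gamma(\nu+1)^{-2}$, hence the leading contribution $x^{2\nu}/\big(2^{2\nu+1}\Gamma(\nu+1)^2\,\nu\big)$, matching the claimed first term. For $m=1$ the sum over $k\in\{0,1\}$ contributes two equal terms $\frac{-1}{0!\,1!\,\Gamma(\nu+1)\Gamma(\nu+2)} = \frac{-1}{\Gamma(\nu+1)\Gamma(\nu+2)}$, so
\[
c_1 = \frac{-2}{2^{2\nu+2}\,\Gamma(\nu+1)\,\Gamma(\nu+2)} = \frac{-1}{2^{2\nu+1}\,\Gamma(\nu+1)\,\Gamma(\nu+2)}.
\]
Dividing by $2\nu+2 = 2(\nu+1)$ and using $(\nu+1)\Gamma(\nu+1)=\Gamma(\nu+2)$ collapses this to $-x^{2\nu+2}\big/\big(2^{2\nu+2}\Gamma(\nu+2)^2\big)$, exactly the second term stated. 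All remaining terms are $O(x^{2\nu+4})$ since the next power is $x^{2\nu+4}$; the implied constant depends only on $\nu$, and one can bound the tail crudely by comparing with the convergent series for $J_\nu(x)^2$ itself on, say, $x\le 1$.

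The only genuinely delicate point is the interchange of summation and integration and the control of the remainder; but since the Bessel series converges locally uniformly, the partial sums converge uniformly on $[0,x]$ for any fixed $x$, so termwise integration is immediate, and the tail estimate is routine. I do not expect any real obstacle — the lemma is essentially bookkeeping on the Taylor coefficients of $J_\nu^2$, and the main care is simply making sure the factor $1/t$ does not create a divergence at the origin, which it does not because $\nu>0$ forces the lowest exponent $2\nu-1>-1$.
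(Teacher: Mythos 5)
Your proposal is correct and follows essentially the same route as the paper: expand $J_\nu$ in its Maclaurin series, square, integrate termwise (legitimate since $2\nu-1>-1$), and read off the $m=0$ and $m=1$ coefficients. The only cosmetic difference is that you compute the Cauchy-product coefficients of $J_\nu(t)^2$ directly, whereas the paper quotes the closed-form series for $J_\nu^2$ with Pochhammer symbols; the resulting coefficients agree.
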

\begin{proof}
The asymptotics comes from the Maclaurin series of Bessel functions,
$$J_{\nu}(t)=\sum\limits_{m=0}^\infty \frac{(-1)^m}{m! \Gamma(m+\nu+1)}\left(\frac{t}{2}\right)^{2m+\nu}.$$
We have 
$$\frac{1}{t} J_{\nu}(t)^2=\frac{1}{t}\sum\limits_{m=0}^\infty \frac{(-1)^m (m+2\nu+1)_m}{m! \Gamma(m+\nu+1)^2}\left(\frac{t}{2}\right)^{2m+2\nu}.$$
This gives us 
\begin{align*}
     \int\limits_0^{x} \frac{1}{t}J_{\nu}(t)^2\mathrm{d}t=\sum\limits_{m=0}^\infty \frac{(-1)^m (m+2\nu+1)_m}{m! \Gamma(m+\nu+1)^2(2m+2\nu)}\left(\frac{x}{2}\right)^{2m+2\nu}.
\end{align*}
Taking the first terms for $m=0$ and $m=1$, we obtain the desired result.
\end{proof}
\quad
\\
Using (\ref{binom}), (\ref{NL relation}), and the result of Lemma \ref{Integral expansion} for $\nu=\frac{d}{2}$ and $s$ small, we get that the second integral $I^{(2)}_{s,N}$ is equal to
\begin{align*}
   &\lim\limits_{L\rightarrow\infty}\frac{N}{\binom{L+d/2}{L}^2}\cdot\frac{\omega_{d-1}}{\omega_d}\cdot2^d\left(\frac{(\varphi_{s,N}L)^d}{ 2^{d}\Gamma(\frac{d}{2}+1)^2 d}-\frac{(\varphi_{s,N}L)^{d+2}}{2^{d+2}\Gamma(\frac{d}{2}+2)^2}+O(s^{d+4})\right)
    \\ &=\lim\limits_{L\rightarrow\infty} \frac{N}{\binom{L+d/2}{L}^2}\cdot\frac{\omega_{d-1}}{\omega_d}\cdot\left(\frac{L^d}{N \Gamma(\frac{d}{2}+1)^2 d}s^d-\frac{L^{d+2}}{4N^{1+2/d}\Gamma(\frac{d}{2}+2)^2}s^{d+2}+O(s^{d+4})\right)\\
    &=\frac{\omega_{d-1}}{\omega_d}\cdot\frac{s^d}{d}-\frac{\omega_{d-1}}{\omega_d} \left(\frac{\Gamma(d+1)}{2}\right)^{2/d}\frac{s^{d+2}}{(d+2)^2}+O(s^{d+4}).
\end{align*}
In this way we obtain that
$$\lim\limits_{N\rightarrow \infty}\mathbb{E}[G_{s,N}]=\frac{\omega_{d-1}}{\omega_d} \left(\frac{\Gamma(d+1)}{2}\right)^{2/d}\frac{s^{d+2}}{(d+2)^2}+O(s^{d+4}),\quad \text{as}\quad s\rightarrow 0.$$
Note that for $d=2$ it is $\frac{s^4}{32}+O(s^6)$.

For large values of $s$, we have 
\begin{align*}
    0\leq\int\limits_0^{\varphi_{s,N}L}\frac{1}{y}J_{d/2}(y)^2\mathrm{d}y&\leq \int\limits_0^{\infty}\frac{1}{y}J_{d/2}(y)^2\mathrm{d}y=\frac{1}{d}. \label{a_d}
\end{align*}

Hence, $0\leq I^{(2)}_{s,N}\leq C_d$, which implies that the pair correlation function has the following asymptotic behaviour,
$$\lim\limits_{N\rightarrow \infty}\mathbb{E}[G_{s,N}]=\frac{\omega_{d-1}}{\omega_d}\cdot\frac{s^d}{d}+O(1), \quad s\rightarrow \infty.$$
This completes the proof of Proposition \ref{harmonic proposition}.

In the setting of the harmonic ensemble, we again observe ``repulsion'' for small $s$. Note that for $d=2$, the asymptotic behaviour coincides with that of the spherical ensemble. For large $s$, it has the same asymptotic behaviour as the i.i.d.\ case, but the error term is worse than the one we obtained for the spherical ensemble on $\mathbb{S}^2$.

Similarly, one can define harmonic ensembles  for projective spaces $\mathbb{RP}^d$,  $\mathbb{CP}^d$,  and $\mathbb{HP}^d$. See \cite{Anderson2023}, \cite{Brauchart2024}, and \cite{Bilyk2024} for more details. For a projective space $\mathbb{FP}^d$, we define 
$$\alpha =\frac{d}{2}\cdot \dim_{\mathbb{R}}\mathbb{F}-1\quad \text{and}\quad \beta=\frac{1}{2}\dim_{\mathbb{R}}\mathbb{F}-1.$$
As a real manifold, the space has dimension $D=\dim_{\mathbb{R}}\mathbb{FP}^d=d\cdot \dim_{\mathbb{R}}\mathbb{F}$. Note that $\alpha=\frac{D}{2}-1$. We will denote by $\vartheta$ the geodesic distance, $\vartheta(\mathbf{x},\mathbf{y})\in[0,\frac{\pi}{2}]$. The chordal distance  is given by
$$\rho(\mathbf{x},\mathbf{y})=\sqrt{\frac{1-\cos(2\vartheta(\mathbf{x},\mathbf{y}))}{2}}=\sin(\vartheta(\mathbf{x},\mathbf{y})).$$
It is easy to see that these two distances are equivalent, we are interested in small values of $0\leq\vartheta\leq\frac{s}{N^{1/D}}$. 

We denote by $\sigma$ the uniform probability measure on $\mathbb{FP}^d$. For any $\mathbf{y}\in \mathbb{{FP}}^d$ one has the identity (see e.g. \cite{Brauchart2024})
\begin{equation}\label{zonal integral}
    \int_{\mathbb{FP}^d}f\left(\cos (2\vartheta(\mathbf{x},\mathbf{y}))\right)\mathrm{d}\sigma(\mathbf{x})=C_{\alpha,\beta}\int\limits_0^{\pi/2}f(\cos(2\vartheta))\sin(\vartheta)^{2\alpha+1}\cos (\vartheta)^{2\beta+1}\mathrm{d}\vartheta,
\end{equation}
where the normalising constant is given by 
$$C_{\alpha,\beta}=\frac{2\Gamma(\alpha+\beta+2)}{\Gamma(\alpha+1)\Gamma(\beta+1)}.$$
In the case of i.i.d.\ random points in $\mathbb{FP}^d$, we compute the asymptotic of $\mathbb{E}[G_{s,N}] $ as $N\rightarrow\infty$.
\begin{prop}\label{iid proposition projective}
 For points $\{\mathbf{x}_1,\dots,\mathbf{x}_N\}$ taken independently from the uniform distribution on $\mathbb{FP}^d$, we have 
\begin{equation}\label{iid asymptotic}
     \lim\limits_{N\rightarrow\infty}\mathbb{E}[G_{s,N}(\mathbf{x}_1,\dots,\mathbf{x}_N)]=C_{\alpha,\beta}\cdot\frac{s^D}{D}.
\end{equation}
\end{prop}
\begin{proof}
We have
\begin{align*}
       \lim\limits_{N\rightarrow\infty}\mathbb{E}[G_{s,N}]  &=\lim\limits_{N\rightarrow\infty}\frac{1}{N}\cdot N(N-1)\cdot \mathbb{E}\left[\mathbbm{1}_{[0,s]}\left(\vartheta(\mathbf{x},\mathbf{y})N^{1/D}\right)\right]
       \\ &=\lim_{N\rightarrow\infty}(N-1)\cdot C_{\alpha,\beta}\int\limits_{0}^{sN^{-1/D}}\sin(\vartheta)^{2\alpha+1}\cos (\vartheta)^{2\beta+1}\mathrm{d}\vartheta
    \\ &=C_{\alpha,\beta}\cdot\frac{s^D}{D}.
    \end{align*}
\end{proof}
The next step is to define the harmonic ensemble on $\mathbb{FP}^d$. For a positive integer $L$, we consider the space of eigenvectors associated to the first $L+1$ eigenvalues of the Laplace operator on $L^2(\mathbb{FP}^d,\sigma)$. The dimension of the space is 
$$N=\frac{(\alpha+\beta+2)_L(\alpha+2)_L}{L!(\beta+1)_L}\asymp\frac{\Gamma(\beta+1)}{\Gamma(\alpha+\beta+2)\Gamma(\alpha+2)}L^D=:K_{\alpha,\beta}L^D,$$
where $(x)_n=x\cdot (x+1)\cdots (x+n-1)$.
The corresponding reproducing kernel is given by
$$K_L(\mathbf{x},\mathbf{y})=\frac{N}{P_L^{(\alpha+1,\beta)}(1)}P_L^{(\alpha+1,\beta)}(\cos(2\vartheta(\mathbf{x},\mathbf{y}))).$$
Note that $P_L^{(\alpha+1,\beta)}(1)=\binom{L+\alpha+1}{L}\asymp \frac{L^{D/2}}{\Gamma(\alpha+2)}$ as $L\rightarrow\infty$. The kernel defines a determinantal point process on $\mathbb{FP}^d$ and generates $N$ points almost surely.

\begin{prop}\label{harmonic hyper proposition}
 For points $\mathbf{x}_1,\dots,\mathbf{x}_N$ in $\mathbb{FP}^d$ generated by the harmonic ensemble, we have
 \begin{equation*}
     \lim\limits_{N\rightarrow\infty}\mathbb{E}[G_{s,N}(\mathbf{x}_1,\dots,\mathbf{x}_N)]=4C_{\alpha,\beta}K_{\alpha,\beta}^{-2/D}\cdot \frac{s^{D+2}}{(D+2)^2}+O(s^{D+4})\quad\text{as}\quad s\rightarrow 0
\end{equation*}
and 
\begin{equation*}
     \lim\limits_{N\rightarrow\infty}\mathbb{E}[G_{s,N}(\mathbf{x}_1,\dots,\mathbf{x}_N)]=C_{\alpha,\beta}\cdot\frac{s^D}{D}+O(1)\quad\text{as}\quad s\rightarrow\infty.
\end{equation*}

\end{prop}
\begin{proof}
Here we compute the pair correlation function,
\begin{align*}
     &\lim\limits_{N\rightarrow \infty}  \mathbb{E}[G_{s,N}]=\lim\limits_{N\rightarrow \infty}\frac{1}{N}\iint\limits_{p,q\in\mathbb{FP}^d}F_{s,N}(p,q)\rho_2^{(N)}(p,q)\mathrm{d}\sigma(p)\mathrm{d}\sigma(q).
\end{align*}
Applying (\ref{zonal integral}), we get
\begin{align}
  &\lim\limits_{N\rightarrow \infty}  \mathbb{E}[G_{s,N}] \nonumber
    \\ &=\lim\limits_{N\rightarrow \infty}NC_{\alpha,\beta}\int\limits_{0}^{\varphi_{s,N}}\left(1-\frac{P_L^{(\alpha+1,\beta)}(\cos(2\vartheta))^2}{\binom{L+D/2}{L}^2}\right)\sin(\vartheta)^{2\alpha+1}\cos(\vartheta)^{2\beta+1}\mathrm{d}\vartheta \nonumber
    \\ &=\lim\limits_{N\rightarrow \infty} NC_{\alpha,\beta}\int\limits_0^{2\varphi_{s,N}L}\left(1-\frac{P_L^{(\alpha+1,\beta)}\left(\cos \frac{y}{L} \right)^2}{\binom{L+D/2}{L}^2}\right)\left(\frac{y}{2L}\right)^{D-1}\frac{\mathrm{d}y}{2L} \nonumber
    \\ &=\lim\limits_{N\rightarrow \infty} \frac{N C_{\alpha,\beta}}{(2L)^D }\int\limits_0^{2\varphi_{s,N}L}\left(y^{D-1}-\Gamma\left(\alpha+2\right)^2\cdot\left(\frac{P_L^{(\alpha+1,\beta)}\left(\cos \frac{y}{L} \right)}{L^{D/2}}\right)^2 y^{D-1}\right)\mathrm{d}y \label{line in int1}
    \\ &=\lim\limits_{N\rightarrow \infty} \frac{N C_{\alpha,\beta}}{(2L)^D }\int\limits_0^{2\varphi_{s,N}L}\left(y^{D-1}-2^D\Gamma\left(\alpha+2\right)^2\cdot\frac{1}{y}J_{D/2}(y)^2\right)\mathrm{d}y, \label{line in int2}
    \end{align}
    where $\varphi_{s,N}=\frac{s}{N^{1/D}}$. To deduce (\ref{line in int2}) from (\ref{line in int1}), we use Lemma \ref{Mehler-Heine formula}. In much the same way as in the case of hyperspheres,  we obtain the asymptotic behaviour of $ \lim\limits_{N\rightarrow\infty}\mathbb{E}[G_{s,N}]$ for $s\rightarrow0$ and $s\rightarrow\infty$. 
\end{proof}

\section{Jittered sampling}

Let $(X,\mathcal{B},\mu)$ be a measurable space with $\mu(X)=1$. Consider $\mathcal{A}=\{A_1,A_2,\dots,A_N\}\subset \mathcal{B}$ a \textit{partition} of $X$, i.e. $\mu(A_i)\neq 0$, $\mu(A_i\cap A_j)=0$ for  $i\neq j$, and $\mu\left(\bigcup\limits_{i=1}^N A_i\right)=1$.

The \textit{jittered sampling} process associated to the partition $\mathcal{A}$ is the determinantal point process defined by the kernel
$$K_\mathcal{A}(x,y)=\sum\limits_{i=1}^N \frac{\mathbbm{1}_{A_i}(x)\mathbbm{1}_{A_i}(y)}{\mu(A_i)}.$$

It generates $N$ points almost surely. Moreover, it is easy to deduce that the process samples one point in each set $A_i$ for $i=1,\dots,N$; the distribution measure for the sample point in  $A_i$ is $\frac{\mu}{\mu(A_i)}$. See for instance \cite{Brauchart2020} for more details in the setting of points on the sphere. We note that jittered sampling, also known as stratified sampling, is a classical and heavily used method in numerical analysis, where it is used as a variance reduction method to improve the performance of the standard Monte Carlo method (using i.i.d.\ random points); see for example \cite{Glasserman2003,lemieux2009monte}.

We focus on the case of \textit{equal-area partitions}, i.e. $\mu(A_i)=\frac{1}{N}$ for $i=1,\dots, N$. By \cite[Theorem 2]{Gigante2016}, such a partition exists for an arbitrary connected Ahlfors regular metric measure space $X$; moreover, for every sufficiently large $N$
\begin{align}\label{diam_bounds}
    cN^{-1/d}\leq \operatorname{diam}(A_i)\leq c^\prime N^{-1/d},\quad i=1,\dots,N,
\end{align} where $d$ is the dimension of the space and the constants $c,c^\prime>0$  depend only on $X$.

Let us denote by $e_\mathcal{A}(x,y)$ the function which is equal to $1$ if $x,y\in A_i$ for some $i$, and $0$ otherwise. It is easy to see that $K_\mathcal{A}(x,y)=N\cdot e_\mathcal{A}(x,y)$, and $e_\mathcal{A}(x,x)=1$ for all $x\in X$. The $2$-point correlation function is given by
$$\rho_2(x,y)=K(x,x)K(y,y)-|K(x,y)|^2=N^2-N^2e_\mathcal{A}(x,y)^2=N^2(1-e_\mathcal{A}(x,y)).$$
In particular, when we consider $X=\mathbb{S}^d$ and $\mu=\sigma_d$ equipped with the geodesic distance $\vartheta_d$, the pair correlation function associated to an equal-area partition $\mathcal{A}=\{A_1,A_2,\dots,A_N\}$ is
\begin{align}
    \mathbb{E}[G_{s,N}]&=\frac{1}{N}\iint\limits_{\mathbf{x},\mathbf{y}\in \mathbb{S}^d}F_{s,N}(\mathbf{x},\mathbf{y})\rho_2^{(N)}(\mathbf{x},\mathbf{y})\mathrm{d}\sigma(\mathbf{x})\mathrm{d}\sigma(\mathbf{y}) \nonumber
    \\ &=N \iint\limits_{\mathbf{x},\mathbf{y}\in \mathbb{S}^d}  \mathbbm{1}_{C\left(\mathbf{x},sN^{-1/d}\right)}(\mathbf{y})\cdot (1-e_\mathcal{A}(\mathbf{x},\mathbf{y}))\mathrm{d}\sigma(\mathbf{x})\mathrm{d} \sigma(\mathbf{y}) \nonumber
    \\ &=N \sum\limits_{i=1}^N\int\limits_{\mathbf{x}\in A_i}\sigma\left( C\left(\mathbf{x},sN^{-1/d}\right)\backslash\, A_i\right)\mathrm{d}\sigma(\mathbf{x}) \label{The integral}
    \\ &=N\sigma\left( C\left(sN^{-1/d}\right)\right)-N\sum\limits_{i=1}^N\int\limits_{\mathbf{x}\in A_i}\sigma\left(A_i\cap C\left(\mathbf{x},sN^{-1/d}\right)\right)\mathrm{d}\sigma(\mathbf{x}). \label{NS-NInt}
\end{align}

\begin{prop}\label{jittered large scale proposition}
    Let $\{\mathcal{A}^{(N)}\}_{N=1}^\infty$ be a sequence of equal-area partitions of $(\mathbb{S}^d,\sigma)$. If all $\mathcal{A}^{(N)}$ satisfy the upper bound in (\ref{diam_bounds}) for some $c^\prime>0$, then for $s>c^\prime$ we have
\begin{align*}
    \lim\limits_{N\rightarrow\infty}\mathbb{E}[G_{s,N}(\mathbf{x}_1,\dots,\mathbf{x}_N)]=\frac{\omega_{d-1}}{\omega_d}\cdot\frac{s^d}{d}-1,
\end{align*}
where $\mathbf{x}_1,\dots,\mathbf{x}_N$ are generated independently by the jittered sampling associated to $\mathcal{A}^{(N)}$.
\end{prop}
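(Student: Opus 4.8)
The plan is to read off the limit directly from the exact identity (\ref{NS-NInt}),
\[
\mathbb{E}[G_{s,N}] = N\sigma\bigl(C(sN^{-1/d})\bigr) - N\sum_{i=1}^N \int_{A_i} \sigma\bigl(A_i \cap C(\mathbf{x}, sN^{-1/d})\bigr)\,\mathrm{d}\sigma(\mathbf{x}),
\]
by showing that, once $N$ is large enough, the first term tends to $\frac{\omega_{d-1}}{\omega_d}\cdot\frac{s^d}{d}$ while the second term is identically equal to $1$.

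For the first term I would invoke the cap-area formula $\sigma_d(C(\varphi)) = \frac{\omega_{d-1}}{\omega_d}\int_0^\varphi (\sin\theta)^{d-1}\,\mathrm{d}\theta$ from the Preliminaries together with the elementary expansion $\int_0^\varphi (\sin\theta)^{d-1}\,\mathrm{d}\theta = \frac{\varphi^d}{d}\bigl(1+O(\varphi^2)\bigr)$ as $\varphi\to 0$. Substituting $\varphi = sN^{-1/d}$ gives $N\sigma_d\bigl(C(sN^{-1/d})\bigr) = \frac{\omega_{d-1}}{\omega_d}\cdot\frac{s^d}{d} + O(N^{-2/d}) \to \frac{\omega_{d-1}}{\omega_d}\cdot\frac{s^d}{d}$; this is the routine part.

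The substance of the argument is the second term, and it is purely geometric. Assume $s > c'$ and take $N$ large enough that $\operatorname{diam}(A_i) \le c'N^{-1/d}$ for every $i$ (the diameter in the geodesic metric, which by the Preliminaries is comparable to the chordal one, so at worst $c'$ must be rescaled). For any cell $A_i$, any centre $\mathbf{x}\in A_i$, and any $\mathbf{y}\in A_i$, one has $\vartheta(\mathbf{x},\mathbf{y}) \le \operatorname{diam}(A_i) \le c'N^{-1/d} < sN^{-1/d}$, hence $\mathbf{y}\in C(\mathbf{x}, sN^{-1/d})$: the cap of radius $sN^{-1/d}$ about any point of $A_i$ swallows $A_i$ entirely. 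Therefore $\sigma\bigl(A_i\cap C(\mathbf{x}, sN^{-1/d})\bigr) = \sigma(A_i) = 1/N$ for all $\mathbf{x}\in A_i$, and the second term collapses to
\[
N\sum_{i=1}^N \int_{A_i} \tfrac1N\,\mathrm{d}\sigma(\mathbf{x}) = N\sum_{i=1}^N \tfrac1N\,\sigma(A_i) = N\cdot N\cdot\tfrac1{N^2} = 1.
\]
So $\mathbb{E}[G_{s,N}] = N\sigma_d\bigl(C(sN^{-1/d})\bigr) - 1$ for all large $N$, and passing to the limit gives $\frac{\omega_{d-1}}{\omega_d}\cdot\frac{s^d}{d} - 1$, as asserted.

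I do not expect a genuine obstacle here: the only delicate points are the strict inequality in the definition of $C(\mathbf{x},\varphi)$ — which is precisely why the hypothesis reads $s > c'$ rather than $s \ge c'$ — and the harmless comparison between geodesic and chordal diameters. It is worth stressing that this simplification of the second term fails for $s \le c'$, where $A_i\cap C(\mathbf{x}, sN^{-1/d})$ genuinely sees the shape of the cell near its boundary; that regime would require finer information about the partitions than the diameter bound (\ref{diam_bounds}) alone, and one would not expect the same clean limit.
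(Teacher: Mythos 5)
Your proposal is correct and follows essentially the same route as the paper: the paper's proof likewise observes that for $s>c'$ the cap $C(\mathbf{x},sN^{-1/d})$ covers all of $A_i^{(N)}$, so the subtracted term in (\ref{NS-NInt}) equals $N\sum_i\int_{A_i}\sigma(A_i)\,\mathrm{d}\sigma=1$, while the first term tends to $\frac{\omega_{d-1}}{\omega_d}\cdot\frac{s^d}{d}$. Your extra remarks on the strict inequality in the cap definition and the choice of metric for the diameter are minor refinements the paper leaves implicit.
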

\begin{proof}
    Note that for $s>c^\prime$ and $\mathbf{x}\in A_i^{(N)}$, a cap $C(\mathbf{x},sN^{-1/d})$ covers the whole set $A_i^{(N)}$. By (\ref{NS-NInt}), we can easily get that
\begin{align*}
    \lim\limits_{N\rightarrow\infty}\mathbb{E}[G_{s,N}]&=\lim\limits_{N\rightarrow\infty}\left[N\sigma\left( C\left(sN^{-1/d}\right)\right)-N\sum\limits_{i=1}^N\int\limits_{\mathbf{x}\in A_i^{(N)}}\sigma\left(A_i^{(N)}\right)\mathrm{d}\sigma(\mathbf{x})\right]\\
    &=\frac{\omega_{d-1}}{\omega_d}\cdot\frac{s^d}{d}-1.
\end{align*}
\end{proof}

The next step is to consider the case of small values of $s$ using (\ref{The integral}). For $A\subset\mathbb{S}^d$, define
\begin{equation}\label{definition_of_M}
    M_\rho(A)=\int\limits_{\mathbf{x}\in A}\sigma\left( C\left(\mathbf{x},\rho\right)\backslash\, A\right)\mathrm{d}\sigma(\mathbf{x}).
\end{equation}
Note that
 \begin{align}
M_\rho(A)&=\int\limits_{\mathbf{x}\in A}\sigma\left(\{ \mathbf{y}\,|\, \mathbf{y}\not\in A, \, \vartheta(\mathbf{x},\mathbf{y})<\rho\}\right)\mathrm{d}\sigma(\mathbf{x}) \nonumber
\\ &=\sigma^2\left( \{(\mathbf{x},\mathbf{y})\,|\, \mathbf{x}\in A, \mathbf{y}\not\in A, \vartheta(\mathbf{x},\mathbf{y})<\rho\}\right) \label{sigma_x_y}
\\ &=\sigma(A)\cdot \left(1-\sigma(A)\right)-\sigma^2\left( \{(\mathbf{x},\mathbf{y})\,|\, \mathbf{x}\in A,\, \mathbf{y}\not\in A,\, \vartheta(\mathbf{x},\mathbf{y})\geq\rho\}\right). \nonumber
 \end{align}

It is easy to see that $M_\rho(A)$ is invariant under taking complements, namely for $A^c=\mathbb{S}^d\backslash A$ we have $M_\rho(A)=M_\rho(A^c)$. Moreover, we observe that all points $\mathbf{x}$ and $\mathbf{y}$ contributing to (\ref{sigma_x_y}) are close to the boundary of $A$ (and $A^c$). 

For simplicity, let us consider the case of $\mathbb{S}^2$. We continue by recalling the notion of a \textit{tube} (see \cite{Gray2004} for more details). Let $\mathcal{C}$ be a closed one-dimenisonal manifold on $\mathbb{S}^2$. Then a tube of radius $\theta>0$ is the set $   T(\mathcal{C},\theta)$ of points $\mathbf{x}\in\mathbb{S}^2$ such that there exist a geodesic of length at most $\theta$ from $\mathbf{x}$ meeting the curve $\mathcal C$ orthogonally. Since $\mathcal{C}$ is closed, for sufficiently small $\theta$, $T(\mathcal{C},\theta)$ coincides with the set of points  $\mathbf{y}\in\mathbb{S}^2$ such that $\operatorname{dist}(\mathbf{y},\mathcal{C})\leq \theta$. 

It was proven by H. Hotelling \cite{Hotelling1939} and generalised by H. Weyl \cite{Weyl1939} that the normalized area of a tube is given by 
\begin{equation*}
    \sigma(T(\mathcal{C},\theta))=\frac{1}{4\pi}\cdot 2 L(\mathcal{C})\sin\theta,
\end{equation*}
where $L(\mathcal{C})$ denotes the length of $\mathcal{C}$. To guarantee the absence of local self-overlapping of the tube (i.e. there are no points in $T(\mathcal{C},\theta)$ that have more than one geodesic to $\mathcal{C}$), we require one of the equivalent conditions
\begin{equation}\label{no overlap condition}
\tan \theta\leq \frac{1}{k_g} \quad\text{or}\quad \sin \theta \leq r,    
\end{equation} where $k_g$ is the geodesic curvature of $\mathcal{C}$ and $r$ is the radius of curvature of $\mathcal{C}$ relative to the Euclidean space $\mathbb{R}^3$.

We also need another notion, the \textit{tubular hypersurface} at a distance $\tau$ from $\mathcal{C}$, which is defined as
\begin{equation*}
    \mathcal{C}_\tau=\{\mathbf{x}\in T(\mathcal{C},\theta)\ |\ \operatorname{dist}(\mathbf{x},\mathcal{C})=\tau\}.
\end{equation*}
\begin{lemma}\label{integral M}
    Let $A\subset\mathbb{S}^2$ such that $\partial A$ is a smooth simple closed curve. Let $\hat{r}\in\mathbb{R}$ such that for $0<\theta<\hat{r}<\pi/2$, there is no overlapping of $T(\partial A,\theta)$ with itself. Then for $0<\rho<\hat{r}/2$ we have
    \begin{equation*}
       \left| M_\rho(A)-L(\partial A)\cdot\frac{\rho^3}{24\pi^2}\right|\leq {L(\partial A)}{ \hat r^{-1}}\rho^4.
    \end{equation*}
\end{lemma}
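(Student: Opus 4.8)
The idea is to compute $M_\rho(A)$ by slicing the set $\{(\mathbf{x},\mathbf{y}) : \mathbf{x}\in A,\ \mathbf{y}\notin A,\ \vartheta(\mathbf{x},\mathbf{y})<\rho\}$ according to the distance $\tau = \operatorname{dist}(\mathbf{x},\partial A)$ of the first coordinate to the boundary curve $\mathcal{C} = \partial A$. Since only points within distance $\rho$ of $\mathcal{C}$ can contribute (if $\mathbf{x}\in A$ is at distance $>\rho$ from $\mathcal{C}$, then $C(\mathbf{x},\rho)\subset A$), the whole computation localises to the tube $T(\mathcal{C},\rho)$, and we are in the regime $\rho < \hat r/2 < \hat r$ where Hotelling--Weyl applies without self-overlap. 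Using the coarea/tube decomposition, $\mathrm{d}\sigma(\mathbf{x})$ restricted to the tube factors (up to curvature corrections of relative size $O(\tau k_g) = O(\tau/\hat r)$) as $\tfrac{1}{4\pi} L(\mathcal{C})\, \mathrm{d}\tau$ times the normalised arclength along the tubular hypersurface $\mathcal{C}_\tau$. So the plan is:
\begin{align*}
M_\rho(A) = \int_{\mathbf{x}\in A\cap T(\mathcal{C},\rho)} \sigma\bigl(C(\mathbf{x},\rho)\setminus A\bigr)\,\mathrm{d}\sigma(\mathbf{x}),
\end{align*}
and for $\mathbf{x}$ at boundary-distance $\tau$, the inner quantity $\sigma(C(\mathbf{x},\rho)\setminus A)$ is, to leading order, the area $S(\rho,\tau)$ of the intersection of a cap of radius $\rho$ with the complementary half-space at signed distance $\tau$ — i.e.\ exactly the quantity computed in Lemma \ref{geometric_proof_formula}, since on scale $\rho$ the boundary $\mathcal{C}$ is a geodesic up to an $O(\rho/\hat r)$ error.

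\textbf{Key steps.} First I would record the reduction to the tube and the elementary bound $\sigma(C(\mathbf{x},\rho)\setminus A)\le \sigma(C(\rho)) = O(\rho^2)$, together with $\sigma(A\cap T(\mathcal{C},\rho)) = O(L(\mathcal{C})\rho)$, so that the error bookkeeping below is against a total mass $O(L(\mathcal{C})\rho^3)$. Second, I would Taylor-expand $S(\rho,\tau)$ for small $\rho$: writing $\beta = \arcsin(\sin\tau/\sin\rho)$ and using $0\le\tau<\rho$, one finds $S(\rho,\tau) = \frac{1}{4\pi}\bigl(\pi - \alpha\cos\rho - 2\beta\bigr)$ behaves like a universal function of the ratio $\tau/\rho$ times $\rho^2$, and integrating over $\tau\in[0,\rho]$ against $\frac{1}{4\pi}L(\mathcal{C})\,\mathrm{d}\tau$ gives the main term $L(\mathcal{C})\cdot\frac{\rho^3}{24\pi^2}$; the constant $\frac{1}{24\pi^2}$ comes out of $\frac{1}{(4\pi)^2}\int_0^1 (\text{cap-overlap profile})\,\mathrm{d}u$, which should evaluate to $\frac{2\pi}{3}\cdot\frac{1}{4\pi}$ after the elementary integral — I would double-check this constant by testing on a spherical cap $A = C(\varphi)$ where everything is explicit. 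Third, I would bound the three sources of error, each shown to be $O(L(\mathcal{C})\hat r^{-1}\rho^4)$: (a) replacing $\sin\theta$ by $\theta$ in the area element and in Hotelling's formula costs relative $O(\rho^2)$, hence absolute $O(L\rho^5)$, absorbed; (b) the curvature correction in the tube volume element, of relative size $O(\tau k_g)\le O(\rho/\hat r)$, contributes $O(L\hat r^{-1}\rho^4)$; (c) replacing the true boundary near $\mathbf{x}$ by its tangent geodesic (so that $\sigma(C(\mathbf{x},\rho)\setminus A) = S(\rho,\tau) + O(\rho\cdot k_g\cdot\rho^2))$) again gives relative $O(\rho/\hat r)$, hence $O(L\hat r^{-1}\rho^4)$. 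Summing, $\bigl|M_\rho(A) - L(\partial A)\frac{\rho^3}{24\pi^2}\bigr| \le L(\partial A)\hat r^{-1}\rho^4$ after absorbing constants into the hypothesis $\rho < \hat r/2$.

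\textbf{Main obstacle.} The delicate point is step (c): making rigorous the claim that, for $\mathbf{x}$ at boundary-distance $\tau < \rho$, the complement-area $\sigma(C(\mathbf{x},\rho)\setminus A)$ equals the geodesic-half-space model $S(\rho,\tau)$ up to an error that is genuinely $O(\hat r^{-1}\rho^3)$ uniformly — one must control how far $\partial A$ can deviate from its tangent geodesic within a cap of radius $\rho$, and this is exactly where the curvature bound $\tan\theta\le 1/k_g$ (equivalently $\sin\theta\le r$) for $\theta<\hat r$ enters: it forces $\partial A$ to lie within an $O(\rho^2/\hat r)$-neighbourhood of the tangent geodesic over an arc of length $O(\rho)$, so the symmetric difference of the two test regions has area $O(\rho\cdot\rho^2/\hat r)$. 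A clean way to organise this is to parametrise $\partial A$ near $\mathbf{x}$ by arclength and use that geodesic curvature bounded by $1/\hat r$ gives a quantitative $C^2$-closeness to the tangent geodesic; alternatively, one can sandwich $A$ locally between two geodesic half-spaces at distances $\tau \pm O(\rho^2/\hat r)$ and use monotonicity of $S(\rho,\cdot)$ together with its Lipschitz bound $|\partial_\tau S| = O(\rho/\pi)$ read off from Lemma \ref{geometric_proof_formula}. Once this geometric comparison is in place, the remaining integrals are the routine Taylor expansions sketched above, and assembling them yields the stated inequality.
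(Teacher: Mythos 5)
Your proposal follows essentially the same route as the paper: localise to the tube around $\partial A$, slice by the distance $\tau$ of $\mathbf{x}$ to the boundary, approximate $\sigma(C(\mathbf{x},\rho)\setminus A)$ by the half-space model $S(\rho,\tau)$ of Lemma~\ref{geometric_proof_formula} with an error governed by the curvature bound encoded in $\hat r$, and integrate the explicit formula for $S(\rho,\tau)$ over $\tau\in[0,\rho]$ to obtain the main term $L(\partial A)\rho^3/(24\pi^2)$. The only cosmetic differences are that the paper kills your curvature correction (b) in the tube Jacobian exactly rather than bounding it, by exploiting $M_\rho(A)=M_\rho(A^c)$ and integrating over both sides of the tube at once (so the density is exactly $\tfrac{1}{4\pi}\cdot 2L(\partial A)\cos\tau\,\mathrm{d}\tau$ with no curvature term), and that it implements your delicate step (c) concretely by sandwiching $\partial A$ between its tangent great circle and the inscribed circle of radius $\hat r$, yielding the explicit bound $|\sigma(C(\mathbf{x},\rho)\setminus A)-S(\rho,\tau)|\leq 2\rho(\rho^2-\tau^2)/\sin\hat r$.
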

\begin{proof}
It is clear that the contribution to $M_\rho(A)$ comes from $\mathbf{x}$ close to the boundary. So let us fix $\rho$ in $(0,\hat r/2)$ and for $0<\tau<\rho$ consider points $\mathbf{x}\in A\cap \mathcal{C}_\tau$. There exists a unique geodesic from $\mathbf{x}$ orthogonal to $\partial A$. We denote by $\mathbf{w}$ the point where the geodesic meets the boundary. The idea is to approximate $\partial A$ near $\mathbf{w}$ by the tangent great circle $C(\mathring{\mathbf{x}},\pi/2)$ and the inscribed circle of radius $\hat r$ centered at $\hat{\mathbf{x}}$ (here $\mathbf{x},\hat{\mathbf{x}}\not\in C(\mathring{\mathbf{x}},\pi/2)$). Note that $C(\hat{\mathbf{x}},\hat{r})$ is inside $A$. This will give an approximation of the area of $C\left(\mathbf{x},\rho\right)\backslash\, A$ by $S(\rho,\tau)$.
    \begin{figure}[ht]
    \centering
    \includegraphics[width=0.75\textwidth]{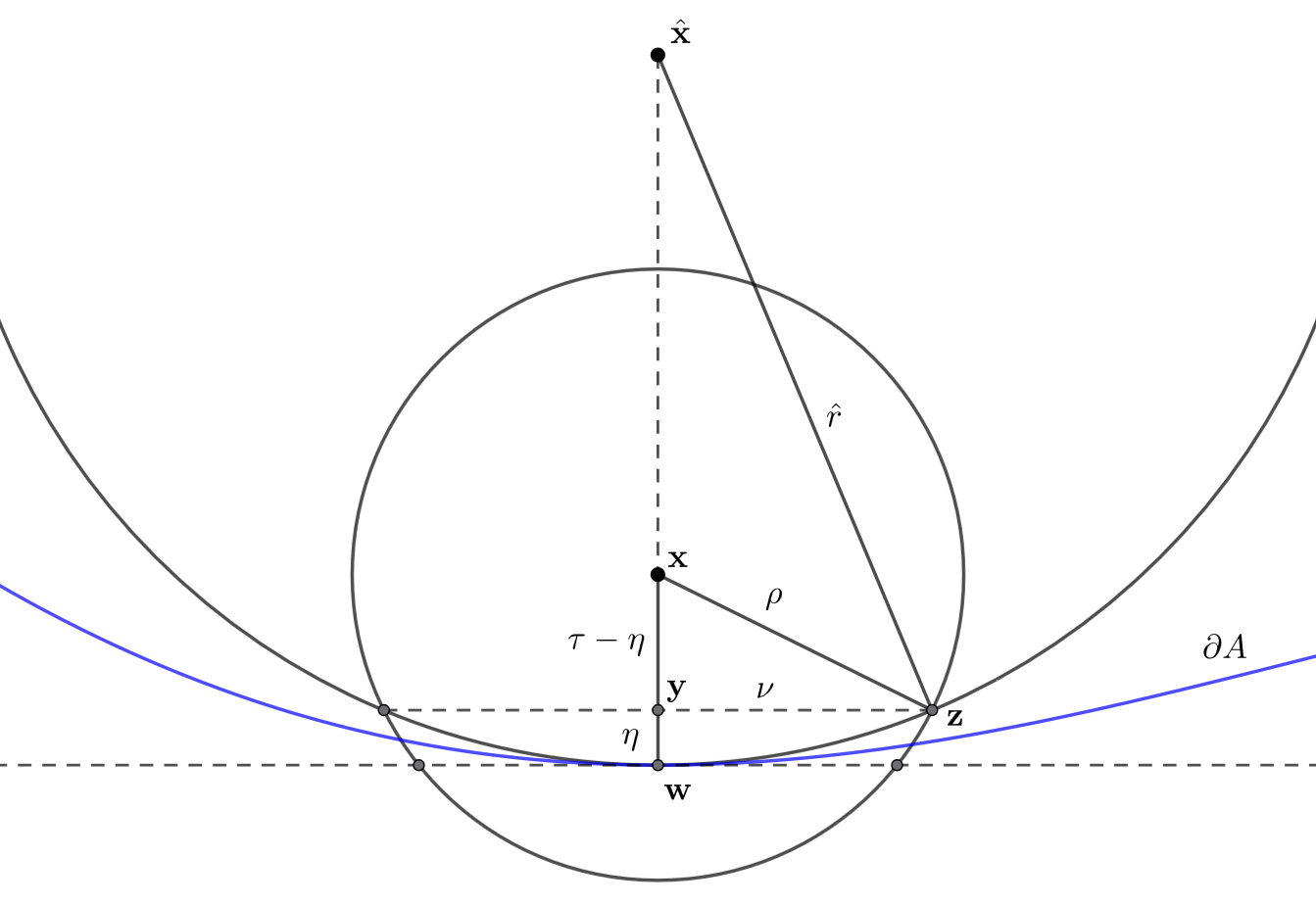}
   \caption{Approximation of $\partial A$ and $C(\mathbf{x},\rho)\backslash  A$.}
    \label{fig:approximation}
\end{figure}

We can bound the difference between $\sigma\left(C(\mathbf{x},\rho)\backslash A\right)$ and $S(\rho,\tau)$ by the area between two great circles (the tangent one and the one passing through the points of intersection of $C(\mathbf{x},\rho)$ and $C(\hat{\mathbf{x}},\hat{r})$) inside $C(\mathbf{x},\rho)$. Namely, we have that
    \begin{equation*}
         \left|\sigma (C\left(\mathbf{x},\rho\right)\backslash\, A)-S(\rho,\tau)\right|\leq 2\rho\cdot \eta,
    \end{equation*}
    where $\eta$ is the maximum distance between the great circles. 
    From the spherical law of cosine for right-angled triangles, we have 
    $$
    \cos \rho=\cos(\tau-\eta)\cos \nu\quad \text{and}\quad \cos \hat{r}=\cos(\hat{r}-\eta)\cos \nu.
    $$
    This implies 
    $$
    \frac{\cos \rho}{\cos\hat{r}}=\frac{\cos\tau\cos\eta+\sin\tau\sin\eta}{\cos\hat{r}\cos\eta+\sin\hat{r}\sin\eta}.
    $$
 We obtain 
    \begin{align*}
        \tan \eta & =\frac{\cos \tau-\cos \rho}{\tan \hat r \cos \rho -\sin \tau}\\
        &\leq\frac{1}{2\cos \rho}\cdot\frac{\rho^2-\tau^2}{\tan\hat r-\frac{\sin \tau}{\cos \rho}}\leq \frac{1}{2\cos \rho}
        \cdot \frac{\rho^2-\tau^2}{\tan \hat r-\tan\rho}\leq\frac{\rho^2-\tau^2}{2\sin(\hat r-\rho)},
    \end{align*}
and hence
\begin{equation*}
    \eta\leq \frac{\rho^2-\tau^2}{\sin \hat r}.
\end{equation*}
    The above holds also for points in $\mathbf{x}\in A^{c}\cap\mathcal{C}_\tau$. Using the symmetry under taking complements, we can work with $M_\rho(A)+M_\rho(A^c)=2M_\rho(A).$ Integrating over sets $\mathcal{C}_\tau$, we obtain the main term
\begin{align*}
    2I_\rho(A)&\coloneqq\int\limits_{\mathbf{x}\in A}\sigma\left( {C}(\mathbf{x},\rho)\cap C(\mathring{\mathbf{x}},\pi/2)\right)\mathrm{d}\sigma(\mathbf{x})
    \\ &=\frac{1}{16\pi^2}\cdot 2L(\partial A)\int\limits_{0}^\rho\left(\pi-\alpha(\tau) \cos \rho-2\beta(\tau)\right)\cos \tau\,\mathrm{d}\tau\\ &=\frac{\sin \rho}{16\pi^2}\cdot 2L(\partial A)\int\limits_{0}^1\left(\pi-2\cos\rho\arctan\left(\frac{\sqrt{1-u^2}}{u\cos\rho}\right)-2\arcsin u\right) \mathrm{d}u
    \\&=\frac{1}{8\pi^2}\cdot2L(\partial A)\cdot(\sin \rho-\rho\cos \rho)
    \\&=2L(\partial A)\left(\frac{\rho^3}{24\pi^2}+O(\rho^5)\right).
\end{align*}
The error term is 
\begin{align*}
   \left|M_\rho(A)-I_\rho(A)\right|&=\frac{\rho}{4\pi}\cdot 2L(\partial A)\int\limits_{0}^\rho \frac{\rho^2-\tau^2}{\sin\hat r}\cos \tau\,\mathrm{d}\tau \\&
    =\frac{\rho}{2\pi\sin\hat r}\cdot 2L(\partial A)(\sin\rho -\rho \cos \rho)
    \\
    &\leq\frac{L(\partial A)}{\ \sin \hat r}\cdot \frac{\rho^4}{3\pi}.
\end{align*}
Note that for $0<\rho<\hat{r}/2<\pi/2$,
\begin{align*}
    \left| I_\rho(A)-L(\partial A)\cdot\frac{\rho^3}{24\pi^2} \right|\leq L(\partial A)\cdot \frac{\rho^5}{30} <\frac{L(\partial A)}{\ \sin \hat r}\cdot\frac{\rho^4}{3\pi}.
\end{align*}
\end{proof}

\begin{remark}\label{remark piecewise smooth}
    Note that Lemma \ref{integral M} can be proven for sets $A$ with piecewise smooth boundary. In this case any tube will overlap itself at the corners. The value of $\hat{r}$ can be taken small enough so that there are no overlappings along the sides and between two non-adjacent sides. An additional error term in comparison with the one already present in our formulation of Lemma \ref{integral M} above will come from these regions near corners of $A$, where we have overlappings. These regions are of total size at most $k\sigma\left(C(\rho)\right)$, where $k$ denotes the number of corners. Moreover, we have that $\sigma\left( C\left(\mathbf{x},\rho\right)\backslash A \right)\leq \sigma\left(C(\rho)\right)$. Hence, integrating over these regions as in (\ref{definition_of_M}), we get an additional error term $k\cdot\frac{\rho^4}{16}$.
\end{remark}
Lemma \ref{integral M} shows that the integral in (\ref{The integral}) depends on the boundaries of the sets $A_i$. Namely, the coefficient of the leading term $\rho^3$ in $M_\rho(A)$ depends on the length of $\partial A$, and hence $\mathbb{E}[G_{s,N}]$ depends on $P(\mathcal{A}^{(N)})$, the sum of perimeters of the sets in $\mathcal{A}^{(N)}$.

    Note that for a fixed area $1/N$ of $A$, the smallest possible perimeter is realised by a spherical cap of area $1/N$. Hence the smallest possible coefficient of $\rho^3$ in $M_\rho(A)$ should be for the case of spherical caps. This can be viewed as a corollary of the result of  Feige and Schechtman.

\begin{lemma}[Theorem 5, \cite{Feige2002}]\label{M for caps}
    Fix an $a$ between $0$ and $1$ and $\rho$ between $0$ and $\pi$. Then for the normalized measure $\mu$ on $\mathbb{S}^d$ the maximum of
$$\mu_\rho(A)=\mu^2\left( \{(\mathbf{x},\mathbf{y})\,|\, \mathbf{x}\in A,\, \mathbf{y}\not\in A,\, \vartheta(\mathbf{x},\mathbf{y})\geq\rho\}\right) $$ 
where $A$ ranges over all (measurable) subsets of $\mathbb{S}^d$ of measure $a$ is attained for
a(ny) cap of measure $a$.
\end{lemma}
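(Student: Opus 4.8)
The plan is to convert the maximization of $\mu_\rho(A)$ into a spherical Riesz-type rearrangement inequality and then settle the latter by two-point symmetrization. Set
$$h_\rho(A)\coloneqq\iint_{\mathbb{S}^d\times\mathbb{S}^d} \mathbbm{1}_A(\mathbf{x})\mathbbm{1}_A(\mathbf{y})\,\mathbbm{1}[\vartheta(\mathbf{x},\mathbf{y})<\rho]\,\mathrm{d}\mu(\mathbf{x})\,\mathrm{d}\mu(\mathbf{y}).$$
Writing $\mathbbm{1}_{A^c}=1-\mathbbm{1}_A$ and using that, by rotational symmetry, $\int_{\mathbb{S}^d}\mathbbm{1}[\vartheta(\mathbf{x},\mathbf{y})\ge\rho]\,\mathrm{d}\mu(\mathbf{y})=1-\sigma_d(C(\rho))$ does not depend on $\mathbf{x}$, one gets
$$\mu_\rho(A)=a\bigl(1-\sigma_d(C(\rho))\bigr)-a^2+h_\rho(A).$$
Since the first two terms depend only on $a$ and $\rho$, it suffices to show that $h_\rho$ is maximized, among measurable subsets of $\mathbb{S}^d$ of measure $a$, by a cap.

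Next I would run the polarization (two-point symmetrization) argument. For a hyperplane $H$ through the origin of $\mathbb{R}^{d+1}$, with reflection $\sigma_H$ and chosen half-space $H^+$, let $A^H$ be the polarization of $A$ (the part of $A\cup\sigma_H A$ in $H^+$ together with the part of $A\cap\sigma_H A$ in $H^-$); this is measurable with $\mu(A^H)=\mu(A)$. The kernel $k(\mathbf{x},\mathbf{y})=\mathbbm{1}[\vartheta(\mathbf{x},\mathbf{y})<\rho]$ is a non-increasing function of the geodesic distance, and for $\mathbf{x},\mathbf{y}$ on the same side of $H$ one checks via $\langle\mathbf{x},\mathbf{y}\rangle-\langle\mathbf{x},\sigma_H\mathbf{y}\rangle=2\langle\mathbf{x},\mathbf{n}\rangle\langle\mathbf{y},\mathbf{n}\rangle\ge0$ (with $\mathbf{n}$ the unit normal of $H^+$) that $\vartheta(\mathbf{x},\mathbf{y})=\vartheta(\sigma_H\mathbf{x},\sigma_H\mathbf{y})\le\vartheta(\mathbf{x},\sigma_H\mathbf{y})=\vartheta(\sigma_H\mathbf{x},\mathbf{y})$, whence the four-point inequality $k(\mathbf{x},\mathbf{y})+k(\sigma_H\mathbf{x},\sigma_H\mathbf{y})\ge k(\mathbf{x},\sigma_H\mathbf{y})+k(\sigma_H\mathbf{x},\mathbf{y})$. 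Splitting the double integral defining $h_\rho$ according to which side $\mathbf{x}$ and $\mathbf{y}$ lie on, and using this supermodularity together with $\mathbbm{1}_{A^H}(\mathbf{x})=\mathbbm{1}_A(\mathbf{x})\vee\mathbbm{1}_A(\sigma_H\mathbf{x})$ and $\mathbbm{1}_{A^H}(\sigma_H\mathbf{x})=\mathbbm{1}_A(\mathbf{x})\wedge\mathbbm{1}_A(\sigma_H\mathbf{x})$ for $\mathbf{x}\in H^+$, one obtains $h_\rho(A^H)\ge h_\rho(A)$.

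Finally I would invoke the classical fact (Baernstein--Taylor, Van Schaftingen) that for every measurable $A$ one can choose a sequence of hyperplanes so that the iterated polarizations converge in measure to the cap $A^\ast$ of the same measure centered at a prescribed point; since $h_\rho$ is continuous with respect to convergence in measure (the kernel $k$ is bounded), this yields $h_\rho(A^\ast)\ge h_\rho(A)$, and undoing the reduction shows that the maximum of $\mu_\rho$ over sets of measure $a$ is attained at a cap. Alternatively, one can apply the Baernstein--Taylor spherical symmetric-decreasing rearrangement inequality to $h_\rho$ directly as a black box.

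The main obstacle is precisely this last step: producing a suitable sequence of hyperplanes and proving $L^1$-convergence of the iterated polarizations to the cap is the technical heart of the argument, and I would import it from the symmetrization literature rather than reproving it here. A minor point worth recording is that $k$ is only non-increasing, not strictly decreasing, in the distance, so this scheme yields attainment by a cap but not uniqueness of the extremizer --- which is exactly what the statement claims (``a(ny) cap'').
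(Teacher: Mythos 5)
Your proposal is sound, but note that the paper does not prove this lemma at all: it is imported verbatim as Theorem 5 of Feige and Schechtman \cite{Feige2002}, so there is no internal proof to compare against. What you have written is essentially a reconstruction of the Feige--Schechtman argument, which likewise reduces the problem to a spherical Riesz-type rearrangement inequality and settles it by two-point symmetrization in the spirit of Baernstein--Taylor. Your reduction identity $\mu_\rho(A)=a\bigl(1-\sigma_d(C(\rho))\bigr)-a^2+h_\rho(A)$ is correct (expand $\mathbbm{1}[\vartheta\ge\rho]=1-\mathbbm{1}[\vartheta<\rho]$ and $\mathbbm{1}_{A^c}=1-\mathbbm{1}_A$, and use rotational invariance), and it correctly converts maximizing $\mu_\rho$ into maximizing the autocorrelation $h_\rho$ with the non-increasing kernel $\mathbbm{1}[\vartheta<\rho]$; the four-point inequality via $\langle\mathbf{x},\mathbf{y}\rangle-\langle\mathbf{x},\sigma_H\mathbf{y}\rangle=2\langle\mathbf{x},\mathbf{n}\rangle\langle\mathbf{y},\mathbf{n}\rangle$ and the supermodularity computation $(u\vee u')(w\vee w')+(u\wedge u')(w\wedge w')\ge uw+u'w'$ are both standard and correct. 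You are also right to identify the convergence of iterated polarizations to the cap as the genuine technical content; importing it from Baernstein--Taylor or Van Schaftingen is exactly what one should do, and is no less rigorous than the paper's outright citation of the whole statement. Your closing remark that the kernel's mere monotonicity gives attainment but not uniqueness, matching the ``a(ny) cap'' phrasing, is a correct and worthwhile observation. The only thing I would add is a sanity check on the direction of the optimization: maximizing $\mu_\rho$ is, by your identity, the same as minimizing the measure of \emph{close} crossing pairs $\{\mathbf{x}\in A,\ \mathbf{y}\notin A,\ \vartheta(\mathbf{x},\mathbf{y})<\rho\}$, which is the isoperimetric-type quantity the paper actually needs (it feeds into the lower bound on $M_\rho$ and hence on $\mathbb{E}[G_{s,N}]$), so the cap being the \emph{maximizer} of $\mu_\rho$ is indeed the statement you want.
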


By Lemma \ref{M for caps}, we obtain a lower bound
\begin{align*}
 \mathbb{E}[G_{s,N}]=N \sum\limits_{i=1}^N M_{sN^{-1/2}}(A_i)\geq N^2 M_{sN^{-1/2}}(C(2\arcsin N^{-1/2}))\approx\frac{s^3}{6\pi}+O(s^4),
\end{align*}
for $N$ large enough.
In order to get stronger ``repulsion'', $P(\mathcal{A}^{(N)})$ has to be as small as possible. However, there is clearly no partition of $\mathbb{S}^2$ into spherical caps. The problem of finding the least-perimeter partition of the sphere into equal-area regions is difficult and largely open. 

Next, we consider a concrete example of an equal-area partition of spheres, the \textit{recursive zonal equal-area partition} $\operatorname{EQ}(d,N)$ of $\mathbb{S}^d$ by Leopardi \cite{Leopardi2006}. The sets of the partition are two polar caps and regions that are rectilinear in spherical polar coordinates.  It was shown in \cite{Leopardi} that $\operatorname{EQ}(d,N)$ is diameter-bounded, i.e. there exists $K_d>0$ such that for all $N\in\mathbb{N}$ and all $R\in\operatorname{EQ}(d,N)$
\begin{align}\label{diameter bounds for EQ}
    \operatorname{diam}(R)\leq K_dN^{-1/d}.
\end{align}
Hence, Proposition \ref{jittered large scale proposition} holds for $\operatorname{EQ}(d,N)$ and $s>K_d$. For small values of $s$, we study the case $d=2$ in more details.

The partition $\operatorname{EQ}(2,N)$ of the sphere $\mathbb{S}^2$ consists of $N$ regions, which form $n$ collars and two polar caps. The number of regions in the $i$th collar is denoted by $m_i$ for $i=1,\dots, n$ ($m_0=m_{n+1}=1$ for polar caps). The angular radius of the polar caps is $$\theta_c=2\arcsin 1/\sqrt{N},$$ so the normalised area is $1/N$. The ideal angle and the ideal number for collars would be $$\delta_I=\sqrt{\frac{4\pi}{N}}\quad \text{and} \quad n_I=\frac{\pi-2\theta_c}{\delta_I}.$$ But the actual number of collars is given by $$n=\operatorname{round}(n_I)\coloneqq\lfloor n_I+1/2\rfloor.$$ The ``fitting'' collar angle is defined as $$\delta_F=\frac{\pi-2\theta_c}{n}=\frac{n_I}{n}\delta_I.$$ It produces ``fitting'' colatitudes of collars: $$\theta_{F,i}=\theta_c+(i-1)\delta_F,$$ where $i=1,\dots,n+1$. The ideal number of regions in the $i$th collar would be 
\begin{align}\label{def y_i}
    y_i=\frac{\sigma(C(\theta_{F,i+1}))-\sigma(C(\theta_{F,i}))}{1/N}=\frac{\cos\theta_{F,i}-\cos\theta_{F,i+1}}{2/N}.
\end{align}
Using a rounding procedure we obtain the actual number of regions in each collar. For $i=1,\dots ,n $, we define 
$$m_i=\operatorname{round}\left(y_i+a_{i-1}\right),\quad\ \ a_i=\sum\limits_{j=1}^i(y_j-m_j), \quad\text{and}\quad   a_0=0.$$
The actual colatitudes can be found by
\begin{align}\label{def theta}
    \sigma(C(\theta_i))=\frac{1}{N}\sum_{j=0}^{i-1}m_j.
\end{align}

Note that the algorithm defines a partition up to rotations of the regions in collars. We can take any partition from this equivalence class, since this will not affect our further computations.
\begin{figure}[h]
    \centering
    \includegraphics[width=0.30\textwidth]{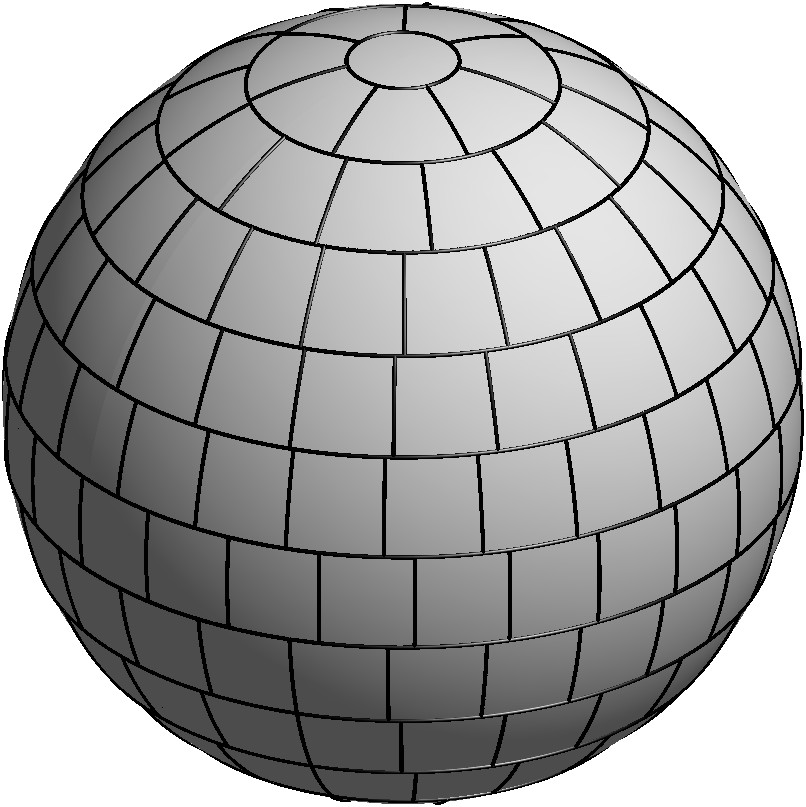}
    \caption{$\operatorname{EQ}(2,200)$.}
    \label{fig:sphere partition}
\end{figure}

Loosely speaking, the regions in $\operatorname{EQ}(2,N)$ look like ``squares'' with side length approximately $\sqrt{4\pi}/\sqrt {N}$. It is expected that the perimeter of each region is close to $8\sqrt{\pi}/\sqrt N$. 
\begin{lemma}\label{total perimeter}
    The sum of the perimeters of all regions in EQ$(2,N)$ is given by 
    $$P(N)=8\sqrt{\pi}\cdot\sqrt{N}+O(1) \quad \text{as}\quad N\rightarrow\infty.$$
\end{lemma}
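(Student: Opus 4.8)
The plan is to evaluate $P(N)$ \emph{exactly} as a sum over the collars and the polar caps, split it into a ``meridian'' and a ``latitude'' contribution, and estimate each by comparing the actual colatitudes $\theta_i$ and region counts $m_i$ with the fitting quantities $\theta_{F,i}$ and $y_i$. In spherical polar coordinates a region of the $i$th collar is a rectangle $[\theta_i,\theta_{i+1}]\times[\varphi,\varphi+2\pi/m_i]$, so its perimeter is $2w_i+\tfrac{2\pi}{m_i}(\sin\theta_i+\sin\theta_{i+1})$, where $w_i:=\theta_{i+1}-\theta_i$; summing over the $m_i$ regions of that collar gives $2m_iw_i+2\pi(\sin\theta_i+\sin\theta_{i+1})$, while the two polar caps contribute $2\pi\sin\theta_1$ and $2\pi\sin\theta_{n+1}$. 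Adding everything and reindexing yields the identity
\[
P(N)=2\sum_{i=1}^{n}m_iw_i+4\pi\sum_{i=1}^{n+1}\sin\theta_i .
\]
Using $\sigma(C(\theta))=\sin^2(\theta/2)$ together with $(\ref{def theta})$ one also records that $\theta_1=\theta_c$ and $\theta_{n+1}=\pi-\theta_c$ \emph{exactly}: the former because $\sigma(C(\theta_1))=1/N=\sigma(C(\theta_c))$, the latter because $\sum_{i=1}^n m_i=\sum_{i=1}^n y_i-a_n=(N-2)-a_n$ and $a_n$ is an integer of modulus $\le\tfrac12$, hence $a_n=0$.

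Next I would collect the quantitative inputs on the partition. Writing $u_i:=\sigma(C(\theta_i))$ and $\bar u_i:=\sigma(C(\theta_{F,i}))$, the definitions of $m_i$, $y_i$, $a_i$ and $(\ref{def y_i})$ give $u_i-\bar u_i=-a_{i-1}/N$, so $|u_i-\bar u_i|\le\tfrac1{2N}$ by the defining property $|a_i|\le\tfrac12$ of the rounding. Since $|\sin^2(\theta/2)-\sin^2(\theta'/2)|=\bigl|\sin\tfrac{\theta+\theta'}2\bigr|\,\bigl|\sin\tfrac{\theta-\theta'}2\bigr|\ge\pi^{-2}|\theta-\theta'|^2$ on $[0,\pi]$, the map $\theta\mapsto\sigma(C(\theta))$ has a $\tfrac12$-Hölder inverse, which yields the uniform bound $|e_i|\le\pi(2N)^{-1/2}$ for $e_i:=\theta_i-\theta_{F,i}$; moreover $e_1=e_{n+1}=0$. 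From $m_i=y_i+a_{i-1}-a_i$ and the second-difference bound $|y_i-y_{i-1}|=\tfrac N2|\cos\theta_{F,i-1}-2\cos\theta_{F,i}+\cos\theta_{F,i+1}|\le\tfrac N2\delta_F^2=O(1)$, consecutive region counts satisfy $|m_i-m_{i-1}|=O(1)$. Finally, $\theta_c=2N^{-1/2}+O(N^{-3/2})$ and $n=\operatorname{round}(n_I)=\tfrac12\sqrt{\pi N}+O(1)$ give $\delta_F N=2\sqrt{\pi N}+O(1)$ and $\delta_F=\Theta(N^{-1/2})$.

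For the latitude sum, split $\sin\theta_i=\sin\theta_{F,i}+(\sin\theta_i-\sin\theta_{F,i})$. Since $\theta_{F,i}=\theta_c+(i-1)\delta_F$ equally spaces $[\theta_c,\pi-\theta_c]$, the quantity $\delta_F\sum_{i=1}^{n}\sin\theta_{F,i}$ is a Riemann sum of mesh $\delta_F$ for $\int_{\theta_c}^{\pi-\theta_c}\sin\theta\,\mathrm{d}\theta=2\cos\theta_c$, hence equals $2+O(\delta_F)$; adding $\sin\theta_{F,n+1}=O(N^{-1/2})$ gives $\sum_{i=1}^{n+1}\sin\theta_{F,i}=2/\delta_F+O(1)$, while $\sum_{i=1}^{n+1}|\sin\theta_i-\sin\theta_{F,i}|\le(n+1)\pi(2N)^{-1/2}=O(1)$. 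Thus the latitude part is $8\pi/\delta_F+O(1)=4\sqrt{\pi N}+O(1)$. For the meridian sum, write $w_i=\delta_F+(e_{i+1}-e_i)$, so $\sum_{i=1}^n m_iw_i=\delta_F(N-2)+\sum_{i=1}^n m_i(e_{i+1}-e_i)$; one summation by parts, using $e_1=e_{n+1}=0$, collapses the last sum to $\sum_{i=2}^{n}(m_{i-1}-m_i)e_i$, which is bounded by $\bigl(\max_i|m_i-m_{i-1}|\bigr)\sum_{i=2}^{n}|e_i|=O(1)\cdot(n-1)\cdot O(N^{-1/2})=O(1)$. Hence the meridian part is $2\delta_F(N-2)+O(1)=2\delta_F N+O(1)=4\sqrt{\pi N}+O(1)$, and adding the two contributions gives $P(N)=8\sqrt{\pi N}+O(1)=8\sqrt\pi\,\sqrt N+O(1)$.

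The main obstacle is the meridian part. A termwise estimate of $\sum_i m_i(e_{i+1}-e_i)$ is hopeless: $m_i$ can be of order $\sqrt N$ and there are $\Theta(\sqrt N)$ collars, so one only gets $O(\sqrt N)$. The summation by parts is what saves the day: it replaces the weight $m_i$ by the increment $m_{i-1}-m_i$, which is $O(1)$, and this is exactly where the regularity of Leopardi's rounding procedure — both $|a_i|\le\tfrac12$ and the $O(1)$ control of $|y_i-y_{i-1}|$ — together with the vanishing boundary terms $e_1=e_{n+1}=0$, is used. Everything else (the explicit constants $\theta_c$, $n$, $\delta_F$, and the Hölder and second-difference estimates) is routine bookkeeping.
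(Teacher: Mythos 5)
Your proof is correct, and while it follows the same overall skeleton as the paper's (split $P(N)$ into a latitude-circle part and a meridian-arc part, control $\theta_i-\theta_{F,i}$ via the rounding bounds $|a_i|\le\tfrac12$, and Abel-sum $\sum_i m_i(\theta_{i+1}-\theta_i)$), the execution differs in three worthwhile ways. First, you account explicitly for the fact that the \emph{sum of perimeters} counts each internal arc twice, arriving at $P(N)=2\sum_i m_i w_i+4\pi\sum_i\sin\theta_i$; the paper instead computes the length of the \emph{union} of the boundaries, obtaining $2\sqrt{\pi N}+2\sqrt{\pi N}$, and the final factor of $2$ needed to reach $8\sqrt{\pi}\sqrt{N}$ is left implicit — your bookkeeping is the cleaner one. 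Second, your error estimates are uniform: the H\"older-type inverse bound gives $|e_i|\le\pi(2N)^{-1/2}$ for every $i$, and the second-difference bound $|y_i-y_{i-1}|\le\tfrac N2\delta_F^2=O(1)$ gives $|m_i-m_{i-1}|=O(1)$ uniformly. The paper works with colatitude-dependent bounds ($|\sin\theta_i-\sin\theta_{F,i}|\le 1/(N\sin\tfrac{\theta_i+\theta_{F,i}}{2})$ and $|m_i-m_{i-1}|\lesssim\sqrt N\sin(\cdot)$), which forces it to estimate $\tfrac1N\sum_i|\cot\tfrac{\theta_i+\theta_{F,i}}{2}|$ via Koksma's inequality; your uniform bounds make the sum $\sum_{i=2}^n(m_{i-1}-m_i)e_i$ trivially $O(1)$ and avoid Koksma entirely. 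Third, you replace the exact trigonometric summation formula for $\sum_i\sin\theta_{F,i}$ by a Riemann-sum estimate with error $O(\delta_F)$, which is equally rigorous and shorter. The identification of the summation by parts (together with $e_1=e_{n+1}=0$ and the $O(1)$ increments of $m_i$) as the crux is exactly right — that is also where the paper's proof does its real work.
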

\begin{proof}
    The union of all the boundaries consists of $n+1$ circles and $N-2$ arcs. The length of the circle at the colatitude $\theta_i$ is equal to $2\pi\sin\theta_i$. The length of one arc on the same level is $\theta_{i+1}-\theta_i$.
    The total length is 
\begin{align}\label{perimeter estimate}
    2\pi\sum\limits_{i=1}^{n+1}\sin\theta_i+\sum\limits_{i=1}^nm_i(\theta_{i+1}-\theta_i).
\end{align}
From the algorithm, it is known that $\theta_1=\theta_{F,1}=\theta_c$ and  $\theta_{n+1}=\theta_{F,n+1}=\pi-\theta_c$. Note that by (\ref{def y_i})  and (\ref{def theta}) we have 
$$\cos\theta_{i}=1-\frac{2}{N}\sum\limits_{j=0}^{i-1}m_j\quad \text{and} \quad \cos\theta_{F,i}=1-\frac{2}{N}\sum\limits_{j=0}^{i-1}y_j.$$
By the rounding procedure for $m_j$'s and $y_j$'s, 
$$\sum\limits_{j=0}^{i-2}y_j<\sum\limits_{j=0}^{i-1}y_j-\frac{1}{2}\leq \sum\limits_{j=0}^{i-1}m_j\leq \sum\limits_{j=0}^{i-1}y_j+\frac{1}{2}<\sum\limits_{j=0}^{i}y_j$$
which gives us 
$$\cos\theta_{F,i+1}<\cos\theta_{F,i}-\frac{1}{N}\leq \cos\theta_i\leq \cos \theta_{F,i}+\frac{1}{N}<\cos \theta_{F,i-1}$$
for $i=2,\dots ,n.$ It can be rewritten as 
$$|\cos\theta_i-\cos\theta_{F,i}|=\left|2\sin\tfrac{\theta_{F,i}+\theta_i}{2}\sin\tfrac{\theta_{F,i}-\theta_i}{2}\right|\leq \frac{1}{N},$$
Note that $\tfrac{\theta_{F,i}+\theta_i}{2}\in(\theta_{F,i-1},\theta_{F,i+1})\subseteq(\theta_c,\pi -\theta_c)$. As $\sin \tfrac{\theta_{F,i}+\theta_i}{2}\neq 0$, this gives us 
$$|\sin\theta_i-\sin\theta_{F,i}|=\left|2\cos\tfrac{\theta_{F,i}+\theta_i}{2}\sin\tfrac{\theta_{F,i}-\theta_i}{2}\right|\leq \frac{1}{N\sin\tfrac{\theta_{F,i}+\theta_i}{2}}\leq\frac{1}{\sqrt N}.$$
We can estimate the first sum in (\ref{perimeter estimate}),
\begin{align*}
    \left|\sum\limits_{i=1}^{n+1}\sin\theta_i-\sum\limits_{i=1}^{n+1}\sin\theta_{F,i}\right| &\leq \sum\limits_{i=2}^n\left|\sin\theta_i-\sin\theta_{F,i}\right|\leq \frac{n-1}{\sqrt N}=O(1) \quad \text{as}\quad N\rightarrow\infty.
\end{align*}
Using summation formulas for $\sin(kx)$ and $\cos(kx)$, we obtain 
\begin{align*}
    \sum\limits_{i=1}^{n+1}\sin\theta_{F,i}&=\sum\limits_{i=1}^{n+1}\sin(\theta_c+(i-1)\delta_F)\\ &=\sin\theta_c+\sin\theta_c\sum\limits_{k=1}^n \cos (k\delta_F)+\cos\theta_c\sum\limits_{k=1}^n\sin(k\delta_F)
    \\ &= \sin\theta_c+\sin\theta_c\cdot \frac{\sin\frac{n\delta_F}{2}\cos\frac{(n+1)\delta_F}{2}}{\sin\frac{\delta_F}{2}}+\cos\theta_c\cdot \frac{\sin\frac{n\delta_F}{2}\sin\frac{(n+1)\delta_F}{2}}{\sin\frac{\delta_F}{2}}.
\end{align*}
Using the fact that $n\delta_F=\pi-2\theta_c$, we get a simplified expression for the sum,
\begin{align*}
      \sum\limits_{i=1}^{n+1}\sin\theta_{F,i}&=\sin\theta_c+\sin\theta_c\cdot \frac{\cos\theta_c\sin(\theta_c-\frac{\delta_F}{2})}{\sin\frac{\delta_F}{2}}+\cos\theta_c\cdot \frac{\cos\theta_c\cos(\theta_c-\frac{\delta_F}{2})}{\sin\frac{\delta_F}{2}}
      \\ &=\sin\theta_c+\cos\theta_c\cot\frac{\delta_F}{2}
      \\ &=\sqrt{\frac{N}{\pi}}+O(1/\sqrt{N})\quad \text{as}\quad N\rightarrow\infty.
\end{align*}
For the second sum in (\ref{perimeter estimate}) we have
\begin{align*}
    \sum\limits_{i=1}^nm_i(\theta_{i+1}-\theta_i)&=\sum_{i=1}^nm_i-(\theta_{F,i+1}-\theta_{F,i}+\alpha_{i+1}-\alpha_i)
    \\&=\sum\limits_{i=1}^nm_i\delta_F+\sum\limits_{i=1}^nm_i(\alpha_{i+1}-\alpha_i)
    \\&=S_{1}+S_2,
\end{align*}
where $\alpha_i=\theta_i-\theta_{F,i}$. The main term is
$$S_1=\delta_F\sum\limits_{i=1}^nm_i=\left(\delta_I+O(1/N)\right)(N-2)=2\sqrt{\pi}\cdot\sqrt{N}+O(1).$$
Recall that $\alpha_1=\alpha_{n+1}=0$ and
\begin{equation*}
    m_{i}=\frac{\cos\theta_i-\cos\theta_{i+1}}{2/N}=N\sin\tfrac{\theta_i+\theta_{i+1}}{2}\sin\tfrac{\theta_{i+1}-\theta_{i}}{2}\leq2\sqrt N\sin \tfrac{\theta_i+\theta_{i+1}}{2}.
\end{equation*}
It implies that 
\begin{equation*}
    |m_{i}-m_{i-1}|\leq 4\sqrt{N}\sin\left(\tfrac{\theta_i}{2}+\tfrac{\theta_{i-1}+\theta_{i+1}}{4}\right).
\end{equation*}
The second sum is
\begin{align*}
    |S_2|=\left|\sum_{i=2}^n(m_i-m_{i-1})\alpha_i\right|&\leq\frac{1}{\sqrt{N}} \sum\limits_{i=2}^n\frac{\sin\left(\tfrac{\theta_i}{2}+\tfrac{\theta_{i-1}+\theta_{i+1}}{4}\right)}{\sin\tfrac{\theta_{i}+\theta_{F,i}}{2}}
    \\&\leq \frac{1}{\sqrt{N}} \sum\limits_{i=2}^n\left(1+\tfrac{1}{\sqrt{N}}\left|\cot\tfrac{\theta_{i}+\theta_{F,i}}{2}\right|\right)
    \\ &=\frac{1}{N}\sum\limits_{i=2}^n\left|\cot\tfrac{\theta_{i}+\theta_{F,i}}{2}\right|+O(1)\quad \text{as}\quad N\rightarrow\infty.
\end{align*}
Applying Koksma's inequality for $n-1$ points $x_i=\frac{\theta_{F,i}+\theta_i}{2}$ on $(\theta_c,\pi-\theta_c)$ and the function $f(x)=|\cot x|$, we estimate the error term by
\begin{align*}
   |S_2| &\leq \frac{n-1}{N}\int\limits_{\theta_c}^{\pi-\theta_c}|\cot(x)|\mathrm{d}x+\frac{n-1}{N}\cdot 2\cot \theta_c\cdot \frac{2}{n-1}+O(1)
    \\ &\leq\frac{1}{\sqrt{N}}\cdot 2\log\left(\frac{1}{\sin\theta_c}\right)+\frac{4\cot \theta_c}{N}+O(1)
    \\ & =O\left(1\right)\quad \text{as}\quad N\rightarrow\infty.
\end{align*}
Combining all together we get the desired result.
\end{proof}

\begin{prop}
 For points $\mathbf{x}_1,\dots,\mathbf{x}_N$ in $\mathbb{S}^2$ generated by the jittered sampling associated to $\operatorname{EQ}(2,N)$, we have
\begin{equation*}
     \lim\limits_{N\rightarrow\infty}\mathbb{E}[G_{s,N}(\mathbf{x}_1,\dots,\mathbf{x}_N)]=\frac{s^2}{4}-1\quad\text{for}\quad s>K_2
\end{equation*}
and for a fixed $0<s<1/4$ and large $N$
\begin{equation*}
   \left|\mathbb{E}[G_{s,N}]-\frac{s^3}{8\pi^2}\right|\leq C_2s^4\,
\end{equation*}
for some $C_2>0.$
\end{prop}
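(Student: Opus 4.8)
The first identity is immediate from Proposition~\ref{jittered large scale proposition}. By the diameter bound (\ref{diameter bounds for EQ}), the partitions $\operatorname{EQ}(2,N)$ satisfy the upper bound in (\ref{diam_bounds}) with $c'=K_2$, so for every $s>K_2$ we obtain $\lim_{N\to\infty}\mathbb{E}[G_{s,N}]=\frac{\omega_1}{\omega_2}\cdot\frac{s^2}{2}-1=\frac{s^2}{4}-1$, using $\omega_1/\omega_2=\tfrac{1}{2}$.

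For the small-scale estimate, the plan is to start from the representation (\ref{The integral}), which with $\rho:=sN^{-1/2}$ reads
\[
\mathbb{E}[G_{s,N}]=N\sum_{i=1}^{N}M_{\rho}\big(A_i^{(N)}\big),
\]
where $A_1^{(N)},\dots,A_N^{(N)}$ denote the cells of $\operatorname{EQ}(2,N)$, and then to apply the local expansion of Lemma~\ref{integral M} — in the piecewise-smooth form of Remark~\ref{remark piecewise smooth} — to each cell, sum over $i$, and substitute $\rho=sN^{-1/2}$. For $N$ large the cells come in two shapes: the two polar caps, which are genuine spherical caps (smooth boundary, no corners), and the remaining cells, each ``rectilinear in spherical polar coordinates'' and hence bounded by two latitude arcs and two meridian arcs — a piecewise-smooth simple closed curve with exactly four corners. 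To invoke Lemma~\ref{integral M}/Remark~\ref{remark piecewise smooth} with radius $\rho$ I must first exhibit, for each cell, a value $\hat r_i>0$ below which $T(\partial A_i^{(N)},\theta)$ has no self-overlap along any smooth side and no overlap between non-adjacent sides, and then require $\rho<\hat r_i/2$. The geometric input is that the cells of $\operatorname{EQ}(2,N)$ are uniformly \emph{fat}: from the explicit colatitudes $\theta_i$ and counts $m_i$ (cf.\ the estimates in the proof of Lemma~\ref{total perimeter}, together with (\ref{diameter bounds for EQ})) one checks that each cell contains and is contained in concentric caps of comparable radii $\asymp N^{-1/2}$, and that the geodesic curvatures of its bounding arcs are $O(N^{1/2})$; by (\ref{no overlap condition}) one may therefore take $\hat r_i\asymp N^{-1/2}$ with constants uniform in $i$ and $N$. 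Consequently $\rho=sN^{-1/2}<\hat r_i/2$ holds for all $i$ and $N$ once $s$ is below an absolute constant, which may be taken at least $1/4$; this is where the hypothesis $0<s<1/4$ enters.

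Granting this, summing Lemma~\ref{integral M} and Remark~\ref{remark piecewise smooth} over $i$ gives
\[
\mathbb{E}[G_{s,N}]=\frac{N\rho^{3}}{24\pi^{2}}\sum_{i=1}^{N}L\big(\partial A_i^{(N)}\big)+N\cdot O\!\Big(\rho^{4}\sum_{i=1}^{N}\big(\hat r_i^{-1}L(\partial A_i^{(N)})+k_i\big)\Big),
\]
where $k_i\in\{0,4\}$ is the number of corners of $A_i^{(N)}$. By Lemma~\ref{total perimeter}, $\sum_{i}L(\partial A_i^{(N)})=P(N)=8\sqrt{\pi}\,\sqrt{N}+O(1)$; since $L(\partial A_i^{(N)})=O(N^{-1/2})$, $\hat r_i^{-1}=O(N^{1/2})$ and $\sum_i k_i\le 4N$, the error sum is $O(N)$, so the second term is $O(N^{2}\rho^{4})=O(s^{4})$ after inserting $\rho=sN^{-1/2}$. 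The main term is $\frac{s^{3}}{24\pi^{2}}\cdot N^{-1/2}P(N)$, which by Lemma~\ref{total perimeter} tends, as $N\to\infty$, to the leading $s^3$-coefficient of the assertion, the $O(N^{-1/2})$ discrepancy being $o(1)$ for fixed $s$ and hence absorbable into $C_2s^{4}$ for $N$ large. This yields $\big|\mathbb{E}[G_{s,N}]-\tfrac{s^{3}}{8\pi^{2}}\big|\le C_2s^{4}$ for a suitable $C_2>0$ (a bound consistent with the lower estimate derived earlier from Lemma~\ref{M for caps}).

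The main obstacle is the uniform geometric bookkeeping for $\operatorname{EQ}(2,N)$: proving the uniform lower bound $\hat r_i\gtrsim N^{-1/2}$ on the tube-injectivity radii, so that the aggregated error really is $O(N^{2}\rho^{4})$ rather than something larger, and treating the four corner regions of each rectangular cell carefully via Remark~\ref{remark piecewise smooth} — in particular checking that near a corner the overlapping contributions of the two incident tubes are counted without duplication, and that the ``rectilinear'' cells are genuinely piecewise-smooth with precisely four corners for $N$ large (the two polar caps being the only smooth, cornerless exceptions). Once Lemma~\ref{integral M} is known to apply cell by cell, the remaining steps — the summation, the substitution $\rho=sN^{-1/2}$, and the absorption of the lower-order remainders via Lemma~\ref{total perimeter} — are routine.
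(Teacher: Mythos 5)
Your argument is essentially the paper's own proof: the large-$s$ statement follows from Proposition \ref{jittered large scale proposition} combined with the diameter bound (\ref{diameter bounds for EQ}), and the small-$s$ estimate is obtained from the representation (\ref{The integral}) by applying Lemma \ref{integral M} and Remark \ref{remark piecewise smooth} cell by cell with $\hat r\asymp N^{-1/2}$ and four corners per non-polar cell, then summing and invoking Lemma \ref{total perimeter} exactly as you describe. The one step you defer as the ``main obstacle'' --- the uniform lower bound $\hat r_i\gtrsim N^{-1/2}$ on the tube-injectivity radii --- is settled in the paper precisely along the lines you sketch: the bounding arcs are pieces of great circles and latitude circles, the smallest radius of curvature being that of the polar circles, $\sin\bigl(2\arcsin(1/\sqrt N)\bigr)>1/\sqrt N$, so by (\ref{no overlap condition}) one may take $\hat r=\tfrac{1}{2\sqrt N}$ uniformly.
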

   
\begin{proof}
By Proposition \ref{jittered large scale proposition} and (\ref{diameter bounds for EQ}), we get the asymptotics for large $s$. The value of $K_2$ is approximately 12.8. When $s$ is small, we can apply Lemma \ref{integral M} and Remark \ref{remark piecewise smooth} with $\hat{r}=\frac{1}{2\sqrt{N}}$ and $k=4$ to the regions in $\operatorname{EQ}(2,N)$. The boundaries of the regions consist of arcs of great circles and smaller circles (that form collars of the partition). For great circles, there is no overlapping for $0<\hat{r}<\pi/2$. For a circle of radius $r$ (as it is defined in (\ref{no overlap condition})), we can take $0<\hat{r}<r$. The smallest radius have the polar circles,
$$r_{p}=\sin\left(2\arcsin\left(1/\sqrt{N}\right)\right)>1/\sqrt{N}\quad \text{for} \quad N>1.$$

So we can take $\hat{r}=\frac{1}{2\sqrt{N}}$ and there will be no overlapping along the arcs and with opposite sides of ``rectangular'' regions.

Let $A_1^{(N)},\dots ,A_N^{(N)}$ be the regions of $\operatorname{EQ}(2,N)$. For $0<s<1/4$ and $\rho=s/\sqrt{N}$, we get that 
\begin{align*}
     \left|N\sum\limits_{i=1}^N M_{\rho}\left(A_i^{(N)}\right)-NP(N)\frac{\rho^3}{24\pi^2}\right|\leq NP(N)\hat{r}^{-1}\rho^4 +N^2\frac{\rho^4}4
\end{align*}
Using Lemma \ref{total perimeter}, we have
\begin{align*}
      \left|\mathbb{E}[G_{s,N}]-\frac{s^3}{8\pi^2}\right|\leq 16\sqrt{\pi}s^4+\frac{s^4}{4}+\frac{Cs^3}{24\pi^2\sqrt{N}}+\frac{16\sqrt{\pi}Cs^4}{\sqrt{N}}
\end{align*}
for some $C>0$ independent of $s$ and $N$. Taking $N$ large enough completes the proof.
\end{proof}
Accordingly, in this model there is ``repulsion'' for small $s$, but less in comparison to the harmonic ensemble and the spherical ensemble. For large $s$, the behaviour of $\mathbb{E}[G_{s,N}]$ matches the asymptotics of the i.i.d.\ case.

\section*{Acknowledgments}

The author is supported by the Austrian Science Fund (FWF), project DOC-183. 

\end{document}